\documentclass[12pt,reqno]{amsart}

\usepackage[utf8]{inputenc}
\usepackage[T1]{fontenc}
\usepackage{graphicx}
\usepackage{fullpage}
\usepackage{amssymb}
\usepackage{amsmath}
\usepackage{amsthm}

\newtheorem{lemma}{Lemma}[section]
\newtheorem{proposition}[lemma]{Proposition}
\newtheorem{theorem}[lemma]{Theorem}

\theoremstyle{remark}
\newtheorem{remark}[lemma]{Remark}

\theoremstyle{definition}
\newtheorem{definition}[lemma]{Definition}

\author{Paweł Biernat}
\address{Rheinische Friedrich-Wilhelms-Universit\"at Bonn,
Mathematisches Institut, Endenicher Allee 60, D-53115 Bonn, Germany}
\email{biernat@math.uni-bonn.de}

\author{Roland Donninger}
\address{Rheinische Friedrich-Wilhelms-Universit\"at Bonn,
Mathematisches Institut, Endenicher Allee 60, D-53115 Bonn, Germany}
\address{Universit\"at Wien, Fakult\"at f\"ur Mathematik, Oskar-Morgenstern-Platz 1, A-1090 Vienna, Austria}
\email{donninge@math.uni-bonn.de}

\thanks{Roland Donninger is supported by the Alexander von Humboldt Foundation via
a Sofja Kovalevskaja Award endowed by the German Federal Ministry of Education
and Research. Partial support by the Deutsche Forschungsgemeinschaft 
(DFG), CRC 1060 'The Mathematics of Emergent Effects', is also gratefully acknowledged.}

\title{Construction of a spectrally stable self-similar blowup
  solution to the supercritical corotational harmonic map heat flow}

\numberwithin{equation}{section}
\numberwithin{table}{section}

\begin{document}

\begin{abstract}
\label{sec-1}

We prove the existence of a (spectrally) stable self-similar blow-up solution $f_0$ to the
heat flow for corotational harmonic maps from $\mathbb R^3$ to the three-sphere. In particular, our result verifies the spectral gap
conjecture stated by one of the authors and lays the groundwork for
the proof of the nonlinear stability of $f_0$.  At the heart of our
analysis lies a new existence result of a monotone self-similar
solution $f_0$.  Although solutions of this kind have already
been constructed before, our approach reveals substantial quantitative
properties of $f_0$, leading to the stability
result. A key ingredient is the use of interval arithmetic: a rigorous computer-assisted
method for estimating functions.
It is easy to verify our results by robust numerics but the purpose
of the present paper is to provide \emph{mathematically rigorous
  proofs}.
\end{abstract}

\maketitle

\section{Introduction}
\label{sec-2}

\noindent Let $(M,g)$ and $(N,h)$ be two Riemannian manifolds with metrics $g$ and $h$, respectively. \emph{Harmonic maps} $F: M\to N$ are defined as critical points of the functional\footnote{Einstein's summation convention is in force.}
\[ S(F)=\int_M g^{jk}\partial_j F^a \partial_k F^b h_{ab}\circ F, \]
which is a generalization of the classical Dirichlet energy.
Harmonic maps have a number of applications in physics, e.g.~in the description of ferromagnetism.  
Given two manifolds $M$ and  $N$, a natural mathematical problem is to construct or, ideally, characterize harmonic maps from $M$ to $N$.
A classical device for that purpose is the associated \emph{harmonic map heat flow} which may be used to deform an arbitrary map to a harmonic one \cite{Eells1964}.
This works well under certain assumptions on the curvature but in general fails due to  the onset of singularities in finite time.
The goal is then to develop a sufficiently good understanding of singularity formation in order to continue the flow beyond the singularity in a suitable manner.
To this end it is necessary to understand the \emph{generic} blowup behavior of the flow.

In many cases it is possible to demonstrate finite-time blowup by constructing explicit solutions.
However, the relevance of these particular examples with respect to generic behavior is strongly dependent on their stability.
The aim of this paper is to study stable singularities of the harmonic map heat flow in the case $M=N=\mathbb S^3$.
As a matter of fact, the curvature of the base manifold $M$ is irrelevant for the asymptotic behavior near a singularity and thus, for simplicity, we may equally well set $M=\mathbb R^3$.
Furthermore, we restrict ourselves to \emph{corotational maps} $F: \mathbb R^3\to\mathbb S^3$ which are of the form $F(r,\theta,\varphi)=(u(r), \theta,\varphi)$, where $(r,\theta,\varphi)$ are the standard spherical coordinates on $\mathbb R^3$, and we use hyperspherical coordinates on $\mathbb S^3$.
The heat flow for such maps is then described by the parabolic Cauchy problem
\begin{align}
\label{eq:u}
\left \{
\begin{array}{l}
\partial_{t}u(r,t)=\partial_{r}^2u(r,t)+\frac{2}{r}\partial_{r}u(r,t)-\frac{\sin(2u(r,t))}{r^2}, \\
u(r,0)=rv_0(r), \qquad
\lVert v_0\rVert_\infty <\infty,
\end{array}
\right .
\end{align}
where now $u: [0,\infty)\times [0,\infty)\to \mathbb R$ is
time-dependent.  An analogous symmetry reduction is possible
for maps $F: \mathbb R^d \to \mathbb S^d$.

As in
\cite{Raphael2011,Angenent2009,Struwe1989,Chen-Struwe-regularity,RapSch14}, we
focus on the blow-up scenario, where a solution $u(r,t)$, starting
from smooth initial data, develops a rapidly increasing gradient at
$r=0$,
\begin{align*}
\lim_{t\to T-}\partial_r u(0,t)=\infty.
\end{align*}
Here, $T>0$ is the blow-up time.  One finds that in dimensions
$d=3,4,5,6$ the gradient increases according to the parabolic scaling
symmetry of the equation ($r\to \lambda r$ and
$T-t\to \lambda^2(T-t)$), so that
$\partial_r u(0,t)\propto (T-t)^{-\frac{1}{2}}$.  This is referred to
as self-similar blow-up and has been studied numerically
\cite{Biernat2011} and rigorously \cite{Gastel2002a,Fan1999}.  In
higher dimensions $d>7$, the blow-up takes a more complicated form as
described in \cite{Biernat2014,biernat-seki,Bizon2014}.

The harmonic map heat flow bares a striking similarity to other
parabolic equations, also displaying a blow-up scenario, such as
Yang-Mills flow and semilinear heat equation.  In fact, one of the
authors and Sch\"orkhuber have recently proved the nonlinear stability
of a self-similar solution for the Yang-Mills flow \cite{Donninger2016}.  The proof in
\cite{Donninger2016} relies on a closed-form expression for the
self-similar profile to solve the spectral stability problem.  Such
a closed-form expression is unavailable for the harmonic map flow but in
this paper we show how to circumvent this issue.  Similar approaches
were used in \cite{Creek,CosHuaSch12}, which inspired this paper.

As already noted, the Cauchy problem \eqref{eq:u} has long been
known \cite{Fan1999,Gastel2002a} to possess self-similar solutions
of the form
\begin{align*}
u(r,t)=f\left(\frac{r}{\sqrt{T-t}}\right),
\end{align*}
where $f$ solves the boundary value problem
\begin{align}
\label{eq:ss}
f''(y)+\left(\frac{2}{y}-\frac{y}{2}\right)f'(y)-\frac{1}{y^2}\sin(2f(y))=0,\qquad y\geq 0,\qquad f(0)=0,\qquad f(\infty)=const.
\end{align}
In \cite{Fan1999}, it is proved that there exists a countable
family of solutions, denoted by $\{f_n\}_{n=0,1,\dots}$, indexed by
their number of intersections with $\pi/2$.  Each solution $f_n$ is
shown to have $n-1$ extrema and $n$ intersections with $\pi/2$.  In
\cite{Gastel2002a}, one finds a related existence result: it is
proved that there exists a monotone solution to \eqref{eq:ss}, which
crosses $\pi/2$ exactly once.  In addition to the rigorous results,
a family of self-similar solutions, with the same qualitative
properties as the ones from \cite{Fan1999}, was found numerically in
\cite{Biernat2011}.
On top of the existing results, our paper adds yet another proof of
existence of a monotone self-similar solution.

\begin{theorem}
\label{th:existence}
Let
\begin{align*}
\widetilde f_0(y):=2\arctan\left(\sum_{n=0}^{14}(f_0)_n
T_{2n+1}\left(\frac{y}{\sqrt{2+y^2}}\right)\right),
\end{align*}
with coefficients $(f_0)_n$ given in Table \ref{tab:f0} and $T_n$ being the standard Chebyshev polynomials.
There exists a monotone solution
$f_0\in C^\infty([0,\infty))$ to \eqref{eq:ss} such that
\begin{align}
\label{eq:estimate}
\lVert f_0-\widetilde{f}_0\rVert\le 5\cdot 10^{-4}
\end{align}
where the norm $\lVert \cdot\rVert$ is given by
\begin{align*}
\lVert f\rVert:=\lVert p_1 f\rVert_{L^\infty(0,\infty)}+\lVert p_3 f'\rVert_{L^\infty(0,\infty)},\qquad p_1(y)=\frac{\sqrt{2+y^2}}{\sqrt{2}y},\qquad p_3(y)=\frac{(2+y^2)^{3/2}}{2\sqrt{2}}.
\end{align*}
\end{theorem}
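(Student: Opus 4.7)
My plan is to run a Newton--Kantorovich contraction argument centered at the explicit approximation $\widetilde f_0$. Writing $f_0 = \widetilde f_0 + g$ and setting
\[
F[f] := f'' + \Bigl(\tfrac{2}{y} - \tfrac{y}{2}\Bigr) f' - \frac{\sin(2f)}{y^2},
\]
the equation $F[f_0]=0$ becomes $L_0 g = -R - N(g)$, where $R := F[\widetilde f_0]$ is the defect, $L_0 g := g'' + (2/y - y/2) g' - 2y^{-2}\cos(2\widetilde f_0)\, g$ is the linearization at $\widetilde f_0$, and $N(g):= y^{-2}\bigl[\sin(2(\widetilde f_0 + g)) - \sin(2\widetilde f_0) - 2\cos(2\widetilde f_0) g\bigr]$ is the quadratic remainder. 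The weights $p_1,p_3$ in $\lVert\cdot\rVert$ encode precisely the regularity forced by the two singular points of \eqref{eq:ss}: linear vanishing at $0$, boundedness at $\infty$, and $y^{-3}$--decay of the derivative.

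The argument requires three quantitative inputs. \emph{(a) Defect bound.} Since $\widetilde f_0$ is an explicit elementary function of the algebraic variable $y/\sqrt{2+y^2}$, the residual $R(y)$ is an explicit elementary function of $y$, and interval arithmetic applied on a rational substitution of $[0,\infty)$ yields a rigorous upper bound $\lVert R\rVert_\sharp \le \delta$ in the norm dual to $\lVert\cdot\rVert$ under $L_0^{-1}$. \emph{(b) Inversion of $L_0$.} Construct two fundamental solutions $\phi_-$ and $\phi_+$ of $L_0\phi=0$, one regular at $y=0$ and one with the correct decay at $\infty$. The integrating factor $y^2 e^{-y^2/4}$ makes the Wronskian explicit up to a multiplicative constant, so the key point is to verify that $\phi_-$ and $\phi_+$ are linearly independent---equivalently, that $0$ is not in the spectrum of $L_0$ on the relevant boundary-value class. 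This is done by producing interval-enclosed local expansions of $\phi_\pm$ near $0$ and $\infty$ and propagating them to a common matching point with a verified ODE integrator; the Green's function representation then yields $\lVert L_0^{-1} h\rVert \le C\lVert h\rVert_\sharp$. \emph{(c) Nonlinear estimate.} The elementary bound $|\sin(2(a+b))-\sin(2a)-2b\cos(2a)|\le 2b^2$, combined with the weight definitions, delivers $\lVert N(g_1)-N(g_2)\rVert \le C'\bigl(\lVert g_1\rVert + \lVert g_2\rVert\bigr)\lVert g_1-g_2\rVert$.

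With these in hand, the operator $T(g) := -L_0^{-1}(R + N(g))$ is a self-map and contraction on the ball $\{g : \lVert g\rVert \le \rho\}$ as soon as $C\delta + CC'\rho^2 \le \rho$ and $2CC'\rho < 1$; both conditions are discharged by interval arithmetic once $\rho = 5\cdot 10^{-4}$. The Banach fixed-point theorem produces the unique $g$ in that ball, and $f_0 := \widetilde f_0 + g$ solves \eqref{eq:ss} and satisfies \eqref{eq:estimate}. Interior smoothness follows from bootstrapping the ODE, and smoothness at $y=0$ from regular-singular-point theory combined with the boundary regularity built into $p_1, p_3$. Monotonicity is obtained \emph{a posteriori}: a rigorously computed pointwise lower bound on $\widetilde f_0'(y)$ exceeds $(5\cdot 10^{-4})/p_3(y)$ uniformly in $y$, hence $f_0' = \widetilde f_0' + g' > 0$ everywhere.

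The main obstacle is step (b): producing a computer-verified quantitative bound on $\lVert L_0^{-1}\rVert$ between the weighted $L^\infty$ spaces on a semi-infinite interval with singular endpoints. One must carefully match interval-controlled local series at $y=0$ and $y=\infty$ through a bulk region using a verified integrator, keeping track of how the weights interact with the Green's kernel. Because the admissible residual size scales inversely with $\lVert L_0^{-1}\rVert$, the whole quantitative statement \eqref{eq:estimate}---and in particular the feasibility of a $15$-term Chebyshev ansatz---is driven by the quality of this single estimate.
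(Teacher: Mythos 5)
Your plan is the right contraction framework and centers on the same fixed-point equation as the paper, but the crucial structural move is different. You propose to invert the linearization $L_0$ at $\widetilde f_0$ directly, by constructing its two fundamental solutions with a verified ODE integrator and local series matched at a bulk point. The paper deliberately does \emph{not} do this: it regards $\mathcal L$ (your $L_0$) as non-invertible in closed form, and instead hand-picks explicit candidate functions $\widetilde w_0,\widetilde w_1$ (Chebyshev series in algebraic variables times the free solutions $v_0,v_1$) and \emph{defines} a nearby operator $\widetilde{\mathcal L}$ as the unique second-order operator annihilating them. The fixed-point equation is then rewritten as $\widetilde{\mathcal L}\delta=\mathcal R+\mathcal N(\delta)+(\widetilde{\mathcal L}-\mathcal L)\delta$, which makes the extra commutator term $(\widetilde{\mathcal L}-\mathcal L)\delta$ a small perturbation on top of the residual. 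The benefit is that $\widetilde{\mathcal L}^{-1}$ has a fully explicit Green's function, and the weighted operator-norm bound is obtained not by brute-force integration of the kernel but by the elegant device $\int_0^\infty G_0(x,y)\,p_1(y)/p_2(x)\,dx=1$ (the weights are chosen so that $\mathcal L_0(1/p_1)=1/p_2$), after which only the ratios $\widetilde G/G_0$ and $\partial_y\widetilde G/\partial_y G_0$ need to be bounded — a one-dimensional, rational interval-arithmetic task. No verified ODE integrator is ever needed; the only transcendental function that has to be enclosed is Dawson's integral, handled by a continued-fraction truncation.

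The genuine gap in your sketch is precisely the part you flag as ``the main obstacle,'' and it is larger than you suggest. Having interval enclosures of $\phi_\pm$ at a matching point tells you the Wronskian is nonzero, but it does not deliver the quantitative bound $\lVert L_0^{-1} h\rVert\le C\lVert h\rVert_\sharp$: for that you would need uniform-in-$y$ control of the weighted integral $\int_0^\infty |G(x,y)|\,p(y)/q(x)\,dx$ of a kernel whose factors are only known through rigorous enclosures on a mesh, over a half-line with two singular endpoints and an exponentially growing Wronskian factor. You gesture at ``keeping track of how the weights interact with the Green's kernel,'' but this is exactly the step that requires a concrete mechanism; the paper's identity $\int G_0\,p_1/p_2=1$ and the $H$, $H_1$ ratio estimates are that mechanism, and they rely on the free operator $\mathcal L_0$ and the explicit $\widetilde{\mathcal L}$, neither of which appears in your scheme. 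A secondary imprecision: the feasibility of the $15$-term ansatz is not really driven by the sharpness of $\lVert L_0^{-1}\rVert$ (the paper tolerates $c_{\mathcal L}=180$); what makes the numbers close is the smallness of the residual $c_{\mathcal R}=10^{-6}$ and of $c_{\widetilde{\mathcal L}}=4\cdot10^{-5}$, i.e.\ the quality of $\widetilde f_0$ and of the approximate fundamental solutions, not the inversion constant.
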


\begin{remark}
  Interestingly, there is still no uniqueness result for self-similar
  solutions: we do not know if the family of solutions found in
  \cite{Fan1999} is exhaustive. Even worse, strictly speaking we do
  not know if the solutions $f_0$ found in this and other papers
  \cite{Gastel2002a,Fan1999} are the same (which, however, is very
  reasonable to assume).  Hopefully, a result similar to
  \cite{Quittner2016} can be established in the future.  From now on,
  to avoid confusion, whenever we refer to $f_0$ we mean the solution
  from Theorem \ref{th:existence}.  \end{remark}

In addition to the existence result we have the following technical
proposition to describe some qualitative properties of $f_{0}$ needed
in the proof of nonlinear stability in
\cite{Biernat-Donninger-Schorkhuber}.

\begin{proposition}
\label{prop:tec}
  Any solution $f\in C^\infty([0,\infty))$ to \eqref{eq:ss} has vanishing even derivatives at $y=0$, that is,
  \begin{align*}
    f^{(2k)}(0)=0,\qquad k\in \mathbb{N}_0,
  \end{align*}
  and for each $k\in \mathbb{N}$ there exists a constant $C_k>0$ such that
  \begin{align*}
    \lvert f^{(k)}(y)\rvert\leq C_k y^{-2-k}
  \end{align*}
  for all $y\geq 1$.
\end{proposition}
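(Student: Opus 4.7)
I tackle the two parts separately.

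\emph{Part 1: even derivatives at $0$.} Since $f\in C^\infty([0,\infty))$ with $f(0)=0$, I expand $f(y)=\sum_{k\ge 1}c_k y^k$ at the origin and match Taylor coefficients in the ODE multiplied by $y^2$, namely $y^2 f''(y)+(2y-\tfrac{y^3}{2})f'(y)=\sin(2f(y))$. The $y^n$-coefficient on the left equals $n(n+1)c_n-\tfrac{n-2}{2}c_{n-2}$ (with $c_{-1}=c_{-2}=0$). I split $\sin(2f)=2f+R(f)$, where $R(f)=\sum_{j\ge 1}(-1)^j(2f)^{2j+1}/(2j+1)!$ contains only odd powers of $f$, and argue by strong induction on $k\ge 1$ that $c_{2k}=0$. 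The key point is a parity count: any monomial $c_{k_1}\cdots c_{k_{2j+1}}$ with $j\ge 1$ contributing to the $y^{2k}$-coefficient of $(2f)^{2j+1}$ satisfies $\sum k_i=2k$; since $2j+1$ is odd, the number of even indices $k_i$ is itself odd and therefore positive, while the constraint $\sum_{i'\ne i}k_{i'}\ge 2$ excludes $k_i=2k$. Hence at least one even $k_i<2k$ appears, and $c_{k_i}=0$ by the induction hypothesis, so the $y^{2k}$-coefficient of $R(f)$ vanishes. Matching then gives $(4k^2+2k-2)c_{2k}=(k-1)c_{2k-2}=0$, whence $c_{2k}=0$.

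\emph{Part 2: the $k=1$ estimate.} The integrating factor $y^2 e^{-y^2/4}$ recasts the ODE as $\bigl(y^2 e^{-y^2/4} f'(y)\bigr)'=e^{-y^2/4}\sin(2f(y))$. Setting $c:=f(\infty)$, the limit $\lim_{y\to\infty}y^2 e^{-y^2/4}f'(y)$ must vanish: a nonzero value would force $f'(y)\sim V_\infty e^{y^2/4}/y^2$, making $\int^\infty f'$ diverge and contradicting boundedness of $f$. Integrating from $y$ to $\infty$ yields
\[
f'(y)=-\frac{e^{y^2/4}}{y^2}\int_y^\infty e^{-s^2/4}\sin(2f(s))\,ds,
\]
and combining $|\sin(2f)|\le 1$ with the tail bound $\int_y^\infty e^{-s^2/4}\,ds\le \tfrac{2}{y}e^{-y^2/4}$ produces $|f'(y)|\le 2/y^3$.

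\emph{Part 2: higher derivatives.} From $|f'|\le C/y^3$ I get $|f-c|\le C/y^2$, hence $\sin(2f(s))=\sin(2c)+O(s^{-2})$. Substituting this into the integral representation and applying iterated integration by parts on $\int_y^\infty e^{-s^2/4}s^{-2j}\,ds\sim 2e^{-y^2/4}/y^{2j+1}$ extracts the asymptotic $f'(y)=-2\sin(2c)/y^3+O(y^{-5})$ and, by bootstrapping with more terms, the full formal expansion $f(y)-c\sim\sum_{n\ge 1}\gamma_n/y^{2n}$, where the $\gamma_n$ are determined recursively from the ODE by matching successive $1/y^{2n+2}$-orders ($\gamma_1=\sin(2c)$, $\gamma_2=\sin(2c)(\cos(2c)-1)$, etc.). The same bootstrap shows that for each $N$ the truncation remainder $R_N(y):=f(y)-c-\sum_{n=1}^N\gamma_n/y^{2n}$ and all its derivatives satisfy $|R_N^{(k)}(y)|\le C_{N,k}\,y^{-2N-k-2}$. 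Since $|(1/y^{2n})^{(k)}|\le C_{n,k}/y^{2n+k}$, choosing any $N\ge 1$ gives $|f^{(k)}(y)|\le C_k y^{-k-2}$ for $y\ge 1$.

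\emph{Main obstacle.} The delicate point in Part 2 is that the two leading pieces $\tfrac{y}{2}f'(y)$ and $\sin(2f(y))/y^2$ of the ODE are each individually of order $y^{-2}$; consequently, a naive induction that bounds $f^{(k+2)}$ from $f^{(k+1)}$ via the ODE fails, because the coefficient $y/2$ multiplying $f^{(k+1)}$ does not preserve the target rate $y^{-k-4}$. The required cancellation reflects the asymptotic profile $f-c\sim\sin(2c)/y^2$, and is made manifest only by extracting this expansion from the integral representation, where differentiability of the tail is transparent.
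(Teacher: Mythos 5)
Your Part 1 (power-series matching at $y=0$, exploiting the odd parity of $\sin(2f)$ in $f$) is a correct, explicit version of the reflection-symmetry argument the paper merely alludes to and declines to write out; nothing to object to there. Your Part 2 base case $k=1$ is also fine and coincides with the paper's: integrate $\bigl(\rho f'\bigr)'=\rho\,\sin(2f)/y^2$ with $\rho=y^2e^{-y^2/4}$ over $[y,\infty)$, observe the boundary term vanishes, and use $\int_y^\infty e^{-s^2/4}\,ds\le\tfrac{2}{y}e^{-y^2/4}$.

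Where you run into trouble is the higher derivatives. You correctly diagnose the obstacle — the term $\tfrac{y}{2}f^{(k+1)}$ in the naively differentiated ODE costs two powers of $y$ — but your proposed fix is not actually carried out. The sentence "\emph{The same bootstrap shows that} $\lvert R_N^{(k)}(y)\rvert\le C_{N,k}\,y^{-2N-k-2}$" is precisely the hard part of the proof and is asserted rather than proved. The cancellation you need is genuinely present, but "differentiability of the tail is transparent" is not a substitute for the estimate: differentiating $f'(y)=-\tfrac{e^{y^2/4}}{y^2}\int_y^\infty e^{-s^2/4}\sin(2f)\,ds$ simply reproduces the ODE, so the factor $y/2$ resurfaces immediately and must be cancelled against the boundary term coming from the integral. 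Extracting the full asymptotic expansion and controlling remainders and all their derivatives \emph{can} be made to work, but you would essentially have to rediscover the cancellation order by order, and as written the argument has a gap.

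The paper sidesteps this entirely with a short commutator identity. Writing the equation as $\tfrac{1}{\rho}(\rho f')'=S_1$ with $S_1=y^{-2}\sin(2f)$, one checks
\begin{align*}
\Bigl(\tfrac{1}{\rho}(\rho f')'\Bigr)'-\tfrac{1}{\rho}(\rho f'')'=-\Bigl(\tfrac{2}{y^2}+\tfrac{1}{2}\Bigr)f'.
\end{align*}
The crucial point is that the commutator coefficient is \emph{bounded}: the dangerous $y/2$ has disappeared into the second instance of $\tfrac{1}{\rho}(\rho\,\cdot\,)'$. Iterating gives $\tfrac{1}{\rho}(\rho f^{(k)})'=S_k$ with $S_k=S_{k-1}'+(\tfrac{2}{y^2}+\tfrac{1}{2})f^{(k-1)}$, so $S_k$ depends only on $f,\dots,f^{(k-1)}$ with algebraic coefficients, and the same integration-over-$[y,\infty)$ step you used for $k=1$ yields $f^{(k)}(y)=-\rho(y)^{-1}\int_y^\infty\rho\,S_k$. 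Under the inductive hypothesis $\lvert f^{(n)}\rvert\lesssim y^{-2-n}$ for $n<k$ one gets $\lvert S_k\rvert\lesssim y^{-1-k}$, and Watson's-lemma type estimates close the induction in one line. I would recommend either supplying the missing details of your remainder bounds or, more economically, adopting this reformulation — it is exactly the device that renders the induction on $k$ painless.
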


Although Theorem \ref{th:existence} seems superfluous at first glance
(we already mentioned two other proofs finding similar solutions), our
approach represents a significant advantage over the previous ones:
the explicit form of $\widetilde f_0$ allows us to rigorously show
that our solution is \emph{spectrally stable} (modulo a gauge mode),
see Theorem \ref{th:stability} below for the precise meaning of
this.  Indeed, the linear stability of a monotone solution was
conjectured in \cite{Biernat2011}, where it is claimed that a
(self-adjoint) linear operator associated to it has no unstable
eigenvalues (i.e. eigenvalues in the interval $(-\infty,0]$), apart
from a gauge eigenvalue $\lambda=-1$ \footnote{One can find an even
  stronger conjecture in \cite{Biernat2011}, namely that $f_0$ is a
  generic solution attractor for a large set of data.  Our result
  serves as the first step in proving this conjecture.}. The latter
refers to an eigenvalue that is related to the time translation
symmetry of Eq.~\eqref{eq:u} and which does not constitute a ``real''
instability. The existence of the gauge eigenvalue is easily seen by
noting that $yf_0'(y)$ is a corresponding eigenfunction, see below. The claim of linear stability in \cite{Biernat2011} was
supported by a simple Sturm oscillation argument, which excludes
eigenvalues $\lambda\in(-\infty,-1)$, along with a numerical test to
exclude $\lambda\in(-1,0]$.  The Sturm oscillation argument can be
made rigorous very easily (and in fact we use it here), but the
rigorous exclusion of eigenvalues in the interval $(-1,0]$ seems impossible without
additional quantitative information on the profile of the self-similar
solution.  Our main motivation for establishing Theorem
\ref{th:existence} in its particular form is to provide this missing
quantitative information.

To analyze the stability of a self-similar solution $f_0$, let us
consider equation \eqref{eq:u} in self-similar variables
\begin{align}
\label{eq:ss-dynamic}
\begin{split}
s=-\log(T-t),\qquad y=\frac{r}{\sqrt{T-t}},\qquad f(y,s)=u(r,t),\\
\partial_{s}f=\partial_{y}^2f+\left(\frac{2}{y}-\frac{y}{2}\right)\partial_{y}f-\frac{1}{y^2}\sin(2f).
\end{split}
\end{align}
Evidently, $f_0$, being a solution to \eqref{eq:ss},
automatically leads to a stationary solution $f(y,s)=f_0(y)$ to \eqref{eq:ss-dynamic} and thus a
solution global in time $s=-\log(T-t)$.  Let us set
$f(y,s)=f_0(y)+e^{-\lambda s}w(y)$ and linearize in $w$, to
get the spectral problem $(\lambda-\mathcal A_0)w=0$ for the operator
\begin{align}
\label{eq:A}
\begin{split}
\mathcal A_{0}w(y)&=-\frac{1}{\rho(y)}\partial_y[\rho(y)\partial_y w(y)]
+\frac{2\cos(2f_0(y))}{y^2}w(y) \\
&=-\frac{1}{\rho(y)}\partial_y[\rho(y)\partial_y w(y)]+\frac{2}{y^2}w(y)+V_{0}(y)w(y) \\
\rho(y)&=y^{2}e^{-\frac{y^2}{4}},\qquad V_{0}(y)=\frac{-4\sin(f_0(y))^2}{y^2}.
\end{split}
\end{align}
Consider $\mathcal A_0$ as an operator on the weighted $L^2$-space
\begin{align*}
\mathcal H:=L^2_\rho(0,\infty)
\end{align*}
with domain $C^\infty_0(0,\infty)$.  In this setting, $\mathcal A_0$
is densely defined and symmetric.  It is not hard to see that the possible endpoint behavior of solutions to $\mathcal A_0 w=0$ at $0$ is
$w(y)\propto y$ and $w(y)\propto y^{-2}$. Consequently, only the recessive solution belongs
to $\mathcal H$ and thus, by the Weyl alternative, $\mathcal A_0$ is in the limit-point case at $0$.  Similarly,
close to $\infty$ we have the two behaviors $w(y)\propto 1$ and
$w(y)\propto y^{-1}e^{\frac{y^2}{4}}$ implying that $\mathcal A_0$ is
limit-point at infinity, too.  By Theorem X.7 from \cite{Reed2}, we conclude that
$\mathcal A_0$ is essentially self-adjoint (the theorem in
\cite{Reed2} applies to operators of the from $-\frac{d^2}{dx^2}+V(x)$
but it is easy enough to reduce $\mathcal A_0$ to that form by
changing variables to $v(y)=\rho(y)^{\frac{1}{2}}w(y)$).
Furthermore, from the endpoint behavior it follows that the closure of $\mathcal A_0$ has compact resolvent. 
In summary, we arrive at the following basic result on the spectral theory of $\mathcal A_0$. 

\begin{proposition} \label{pr:self-adjoint} The operator $\mathcal
  A_0$ is
essentially self-adjoint and the spectrum of its closure consists of a countable number of real, simple eigenvalues.  
\end{proposition}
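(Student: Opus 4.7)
\emph{Proof plan.} The strategy is to transform $\mathcal{A}_0$ to one-dimensional Schrödinger form, carry out a Frobenius analysis at the two endpoints to establish the limit-point property at each, and then invoke Theorem X.7 of \cite{Reed2} together with a standard discreteness criterion. Concretely, I would substitute $v(y) := \rho(y)^{1/2} w(y)$, which gives a unitary identification of $\mathcal{A}_0$ with the Schrödinger operator $L v = -v'' + V_{\mathrm{eff}}(y) v$ on $L^2(0,\infty)$. A direct computation using $\rho'/\rho = 2/y - y/2$ yields
\[
V_{\mathrm{eff}}(y) = \frac{2\cos(2f_0(y))}{y^2} + \frac{1}{4}\left(\frac{\rho'}{\rho}\right)^2 + \frac{1}{2}\left(\frac{\rho'}{\rho}\right)' = \frac{2\cos(2f_0(y))}{y^2} + \frac{y^2}{16} - \frac{3}{4},
\]
so $V_{\mathrm{eff}}(y) = 2/y^2 + O(1)$ as $y\to 0^+$ and $V_{\mathrm{eff}}(y) = y^2/16 + O(1)$ as $y\to\infty$.

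Next I would make the endpoint analysis sketched in the preceding discussion rigorous. At $y=0$, smoothness of $f_0$ together with $f_0(0)=0$ (using Proposition \ref{prop:tec}) makes $\mathcal{A}_0 w = 0$ a regular-singular perturbation of a Cauchy--Euler equation with indicial equation $\alpha^2+\alpha-2=0$; a standard Frobenius iteration produces linearly independent solutions $w_1(y)\sim y$ and $w_2(y)\sim y^{-2}$, only the former of which lies in $\mathcal{H}$ near zero. At $y=\infty$, the leading equation is $w'' = (y/2) w'$; one integration yields either $w\equiv\mathrm{const}$ or $w'\propto e^{y^2/4}$, and a refined expansion gives $w_1\sim 1$ and $w_2\sim y^{-1}e^{y^2/4}$, of which only $w_1$ belongs to $\mathcal{H}$ near infinity. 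By the Weyl alternative, $\mathcal{A}_0$ is in the limit-point case at both endpoints, so Theorem X.7 of \cite{Reed2}, applied to the unitarily equivalent operator $L$, yields essential self-adjointness of both $L$ and $\mathcal{A}_0$.

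Finally, for the spectral structure: since $V_{\mathrm{eff}}(y)\to\infty$ as $y\to\infty$ and $L$ is limit-point at both endpoints, a standard discreteness criterion for Sturm--Liouville operators (alternatively, constructing the resolvent kernel from the recessive solutions at the two endpoints and estimating its Hilbert--Schmidt norm against the Gaussian weight) shows that $\overline{L}$, hence $\overline{\mathcal{A}_0}$, has compact resolvent; the spectrum is therefore purely discrete and real. Simplicity is automatic from the limit-point property, since any eigenfunction in $\mathcal{H}$ must be proportional to the (one-dimensional) recessive solution at each endpoint, forcing all eigenspaces to be one-dimensional. The only genuinely delicate step is the Frobenius construction at $y=0$: because the correction to the Cauchy--Euler equation is $O(1)$ rather than of strictly smaller order, the asymptotic expansions for $w_1$ and $w_2$ require justification by a contraction/iteration argument rather than by direct comparison, but the smoothness of $f_0$ supplied by Proposition \ref{prop:tec} makes this routine.
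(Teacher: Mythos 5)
Your proposal is correct and follows essentially the same route as the paper: transform to Schrödinger form via $v=\rho^{1/2}w$, establish limit-point at both endpoints from the asymptotic solution behaviors ($w\propto y$ vs.\ $w\propto y^{-2}$ at the origin, $w\propto 1$ vs.\ $w\propto y^{-1}e^{y^2/4}$ at infinity), invoke Theorem X.7 of \cite{Reed2} for essential self-adjointness, and deduce discreteness and simplicity of the spectrum from compact resolvent and the limit-point property. The paper gives only a brief sketch in the paragraph preceding the proposition; your version supplies the explicit computation of $V_{\mathrm{eff}}$ and fills in the Frobenius and discreteness details, but the argument is structurally identical.
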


\begin{definition} From now on we will denote by $\mathcal A_0$ the
  unique self-adjoint extension on $L^2_\rho(0,\infty)$ of the formal
  differential operator defined in \eqref{eq:A}.  \end{definition}
From the perturbation ansatz $f(y,s)=f_0(y)+e^{-\lambda s}w(y)$ it is
evident that negative eigenvalues of $\mathcal A_0$ lead to linear
instabilities of $f_0$.  As a matter of fact, there exists the
negative eigenvalue $-1$ but this is a gauge eigenvalue, i.e., it is
related to the freedom of choosing the parameter $T$ in the definition
of the self-similar variables and therefore it is not related to an
instability of $f_0$.  Consequently, to prove linear stability of
$f_0$ it is necessary to rule out the existence of negative
eigenvalues of $\mathcal A_0$ other than $-1$.  As already mentioned,
the most difficult part is to prove the absence of eigenvalues in
$(-1,0]$ since this hinges on the particular shape of the potential
$V_0$. Thus, a rigorous proof of this \emph{spectral gap property}
requires quantitative information on $f_0$.  Our estimates
\eqref{eq:estimate} on $f_0$ lead to very precise bounds on $V_0$ and
in turn allow us to prove the following stability result.  This result
is an indispensable ingredient in the proof of the nonlinear asymptotic
stability of $f_0$ in the companion paper \cite{Biernat-Donninger-Schorkhuber}.

\begin{theorem}
  \label{th:stability} The only eigenvalue of the operator
  $\mathcal A_0$ in the interval $(-\infty,0]$ is $-1$.  \end{theorem}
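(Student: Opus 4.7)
The plan combines a ground-state argument (which handles the half-line $(-\infty,-1]$) with a quantitative reduction that pushes the remaining interval $(-1,0]$ into a weighted Poincaré inequality verifiable using the explicit profile from Theorem \ref{th:existence} together with interval arithmetic.

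\textbf{Step 1: Identifying $-1$ as the ground state.} A direct computation starting from \eqref{eq:ss} (differentiate and multiply by $y$) shows that $g(y):=yf_0'(y)$ satisfies $\mathcal A_0 g=-g$, and Proposition \ref{prop:tec} guarantees $g\in\mathcal H$. By the monotonicity of $f_0$ from Theorem \ref{th:existence}, together with $f_0(0)=0$ and $f_0\not\equiv 0$, we have $f_0'>0$ on $(0,\infty)$ and hence $g>0$ there. Since a self-adjoint Sturm--Liouville operator in the limit-point case at both endpoints admits only one (up to constants) sign-definite eigenfunction, namely the ground state, $-1$ is the smallest eigenvalue of $\mathcal A_0$ and no eigenvalue lies in $(-\infty,-1)$.

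\textbf{Step 2: Ground-state transformation.} For $\lambda\in(-1,0]$ I substitute $\phi=g\psi$ into $\mathcal A_0\phi=\lambda\phi$. A short calculation produces the equivalent, potential-free equation
\begin{equation*}
-\frac{1}{\tilde\rho}\partial_y\bigl(\tilde\rho\,\psi'\bigr)=\mu\,\psi,\qquad \mu:=\lambda+1\in(0,1],\qquad \tilde\rho(y):=y^{4}f_0'(y)^{2}e^{-y^{2}/4}.
\end{equation*}
The map $\phi\mapsto\phi/g$ is a unitary isomorphism $L^2_\rho\to L^2_{\tilde\rho}$ intertwining $\mathcal A_0+1$ with the manifestly non-negative operator $\widetilde{\mathcal A}\,\psi:=-\tilde\rho^{-1}(\tilde\rho\,\psi')'$. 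Proposition \ref{prop:tec} ensures $\int_0^\infty\tilde\rho\,dy<\infty$, so $\psi\equiv 1\in L^2_{\tilde\rho}$ and realises the ground state $\mu=0$. Theorem \ref{th:stability} is thereby reduced to the absence of eigenvalues of $\widetilde{\mathcal A}$ in $(0,1]$, equivalently the strict weighted Poincaré inequality
\begin{equation*}
\int_0^\infty(\psi')^{2}\,\tilde\rho\,dy\;>\;\int_0^\infty\psi^{2}\,\tilde\rho\,dy
\end{equation*}
on the codimension-one subspace $\bigl\{\psi\in H^{1}(\tilde\rho\,dy):\int_0^\infty\psi\,\tilde\rho\,dy=0\bigr\}$.

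\textbf{Step 3: Quantitative verification and main obstacle.} The main difficulty is the strict inequality above with constant exactly $1$. The norm in Theorem \ref{th:existence} was designed precisely for this step: the weight $p_3$ on $f'$ guarantees that $\widetilde f_0{}'$ approximates $f_0'$ with a uniformly small error, so $\tilde\rho$ admits tight two-sided pointwise enclosures derived from $\widetilde f_0$ and usable in interval arithmetic. To conclude I would take one of two standard routes: (i) a Sturm/Prüfer zero-counting argument, propagating the fundamental solution $\phi_\lambda$ of $(\mathcal A_0-\lambda)\phi_\lambda=0$ (normalised by $\phi_\lambda(y)/y\to 1$ at $0$) through interval arithmetic on $[0,y_*]$ and matching at $y_*$ to the Weyl solution decaying at infinity (whose behaviour is dictated by $V_0(y)=O(y^{-6})$ coming from $\sin^{2}f_0=O(y^{-4})$, via Proposition \ref{prop:tec}), certifying that the matching Wronskian does not vanish uniformly for $\lambda\in(-1,0]$; or (ii) a direct attack on the Poincaré inequality by splitting $L^{2}_{\tilde\rho}$ into a low-frequency block (Rayleigh quotient bounded below by rigorous numerics on a fixed Hermite-like test basis) and a high-frequency tail (handled by a crude bound exploiting the Gaussian decay of $\tilde\rho$). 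Either route reduces Theorem \ref{th:stability} to a finite list of rigorously enclosed inequalities, which is exactly the form of statement that the quantitative version of Theorem \ref{th:existence} is tailored to deliver.
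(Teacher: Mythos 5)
Your Steps 1--2 set up a sensible alternative framing.  The ground-state argument in Step 1 (a positive eigenfunction must be the lowest, hence $-1$ is the bottom of the spectrum) is a clean substitute for the paper's Picone/Sturm comparison lemma, and the ground-state transformation $\phi=g\psi$ in Step 2 correctly recasts the problem as a weighted Poincar\'e inequality.  One small inaccuracy in Step 1: monotonicity of $f_0$ plus $f_0(0)=0$, $f_0\not\equiv 0$ does \emph{not} by itself give $f_0'>0$ pointwise on $(0,\infty)$ --- a monotone smooth function can have vanishing derivative at interior points.  The paper establishes strict positivity of $yf_0'(y)$ in Lemma~\ref{lem:gauge} precisely via the quantitative enclosure $f_0'\ge \widetilde f_0'-5\cdot 10^{-4}/p_3$ from Theorem~\ref{th:existence} and an interval-arithmetic sign check; you would need the same computation, not just qualitative monotonicity.

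The genuine gap is Step 3: it is an outline of two possible strategies, neither of which is executed.  You write that you ``would take one of two standard routes'' and that either ``reduces [the theorem] to a finite list of rigorously enclosed inequalities,'' but those inequalities are never produced or verified; in particular the stated obstacle --- certifying the strict inequality with constant exactly $1$ --- is exactly the content of the theorem, so Step 2 has only reformulated the problem.  Your proposed route (i) (shooting/matching for every $\lambda\in(-1,0]$) is also structurally heavier than what the paper actually does: the paper solves the IVP only at $\lambda=0$, proves via a contraction argument on $[0,3]$ plus a qualitative monotonicity argument on $[3,\infty)$ that $W_0$ has exactly one zero (Lemma~\ref{lem:w0}), shows $0$ is not an eigenvalue, and then invokes Sturm's oscillation theorem once to count eigenvalues in $[\lambda,0)$ for any $\lambda\le -1$.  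Your route would additionally have to control the Weyl solution at $\infty$ (which contains $e^{y^2/4}$ growth) uniformly in $\lambda$, a technicality the paper circumvents by truncating the source term outside $[0,3]$.  As submitted, the proposal identifies the right difficulty and suggests plausible machinery, but it stops short of a proof precisely where the hard, quantitative work begins.
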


Furthermore, the strict bounds on $f_0$ could potentially lead to a
rigorous treatment of continuation beyond the blow-up, along the lines of
the following informal reasoning.  In \cite{Biernat2011} it is
conjectured that one can construct a unique continuation for a
solution
\begin{align*}
u_0(r,t):=f_0\left(\frac{r}{\sqrt{T-t}}\right),\qquad t<T
\end{align*}
past the blow-up time by defining
\begin{align*}
u_0(r,t):=g_0\left(\frac{r}{\sqrt{t-T}}\right),\qquad t>T.
\end{align*}
In the above definition, $g_{0}$ is a profile of an expanding
self-similar solution satisfying
\begin{align}
\label{eq:expander}
g''(y)+ \left(\frac{2}{y}+\frac{y}{2}\right)g'(y)-\frac{1}{y^2}\sin(2g(y))=0,\qquad g(0)\in\{0,\pi\},\qquad g(\infty)=const.
\end{align}
Note that in the above boundary value problem the expanding profile
$g$ can freely select the boundary condition at $y=0$, which means
that $u_{0}(0,t)$ may jump from $0$ to $\pi$.  In effect, the
underlying map $F:\mathbb R^{3}\to \mathbb S^{3}$ would change its
homotopy class in consequence of the blow-up.  One of the main
motivations for studying the blow-up patterns is to determine whether
such a jump occurs.

The formal construction in \cite{Biernat2011} requires that $g_{0}$
satisfies the following matching condition
\begin{align}
\label{eq:matching}
g_0(\infty)=\lim_{t\to T+}u_0(r,t)=\lim_{t\to T-}u_0(r,t)=f_0(\infty).
\end{align}
That is, the asymptotics of $g_0$ and $f_0$ have to coincide at
infinity.  In this sense, the question of unique continuation past the
blow-up can be reduced to the question of uniqueness of solutions to
the boundary value problem \eqref{eq:expander} with a boundary
condition $g_{0}(\infty)=f_{0}(\infty)$.
Germain and Rupflin, who studied the boundary value problem
\eqref{eq:expander} in \cite{Germain2010}, prove that the closer
$g(\infty)$ is to $\pi/2$ the more solutions to \eqref{eq:expander}
there are. 
In the more recent paper \cite{Germain2016} it is shown that there are at least two  stable expanding self-similar solutions with the same initial data, provided $g(\infty)$ is sufficiently close to $\pi/2$. 
For $f_{0}$ from
Theorem~\ref{th:existence} one can explicitly compute that
$\lvert f_{0}(\infty)-\pi/2\rvert> 0.56$ which is conjecturally large enough to allow for a unique continuation. 

\section{Existence of a self-similar solution}
\label{sec-3}

The standard approach would be to define $\delta=f-\widetilde f_0$
and rewrite the equation \eqref{eq:ss} as
\begin{align}
\label{eq:2a}
\mathcal{L}\delta={\mathcal{R}(\widetilde f_0)}+\mathcal N(\delta)
\end{align}
with
\begin{align*}
\mathcal L\delta(y)&=-\delta''(y)-\left(\frac{2}{y}-\frac{y}{2}\right)\delta'(y)+\frac{2}{y^2}\delta(y)+\widetilde V_{0}(y)\delta(y),\qquad \widetilde V_{0}(y)=\frac{-4\sin(\widetilde f_{0}(y))^{2}}{y^{2}}\\
\mathcal{R}(\widetilde f_0)(y)&=\widetilde f_0''(y)+\left(\frac{2}{y}-\frac{y}{2}\right)\widetilde f_0'(y)-\frac{1}{y^2}\sin(2\widetilde f_0(y))\\
\mathcal N(\delta)(y)&=-\frac{1}{y^2}\left (\sin(2\widetilde f_0(y)+2\delta(y))-\sin(2\widetilde f_0(y))-2\cos(2\widetilde f_0(y))\delta(y)\right ) \\
&=\frac{2}{y^2}\left(\cos(2\widetilde f_0(y))\delta(y)-\cos(2\widetilde f_0(y)+\delta(y))\sin(\delta(y))\right).
\end{align*}
The goal now is to invert the operator $\mathcal{L}$ and prove that
\begin{align*}
{\mathcal{K}}(\delta):=\mathcal{L}^{-1}({\mathcal{R}(\widetilde f_0)}+\mathcal N(\delta))
\end{align*}
is a contraction if $\lVert\delta\rVert$ is small
enough.  Unfortunately this plan cannot succeed as the operator
$\mathcal{L}$ contains a very complicated potential
$\widetilde V_{0}$: a nonlinear function of $\widetilde f_0$, which
itself is already complicated.  Because of the complicated form of the
potential, $\mathcal{L}$ cannot be inverted explicitly.

The remedy comes in the form of the following trick.  Imagine we can
construct an operator~$\widetilde{\mathcal{L}}$, which we can invert
explicitly, and we rewrite Eq.~\eqref{eq:2a} as
\begin{align}
\label{eq:3}
\widetilde{\mathcal{L}}\delta={\mathcal{R}(\widetilde f_0)}+\mathcal N(\delta)+(\widetilde{\mathcal{L}}-\mathcal{L})\delta.
\end{align}
If, in addition, the difference $\widetilde{\mathcal{L}}-\mathcal{L}$ is small in a
suitable sense, then the map
\begin{align*}
\widetilde {\mathcal{K}}(\delta):=\widetilde{\mathcal{L}}^{-1}({\mathcal{R}(\widetilde f_0)}+\mathcal N(\delta)+(\widetilde{\mathcal{L}}-\mathcal{L})\delta)
\end{align*}
turns out to be the right object to apply a contraction mapping
principle to, as we can estimate all the objects on the right hand side
explicitly.  Because solving \eqref{eq:3} is equivalent to solving
\eqref{eq:2a}, it is sufficient to find a fixed point of
$\widetilde {\mathcal K}$.  At this point, it remains to show that
$\widetilde {\mathcal{K}}$ is a contraction on a suitable closed
subset of a Banach space, chosen here as
\begin{align*}
X=\{\delta\in C^1([0,\infty)),\, \lVert\delta\rVert\le 5\cdot 10^{-4}\}
\end{align*}
with $\|\cdot\|$ given in Theorem \ref{th:existence}.
The rest of the proof is divided as follows, where we abbreviate
$\|\cdot\|_\infty:=\|\cdot\|_{L^\infty(0,\infty)}$.

\begin{enumerate}
\item We construct an operator $\widetilde{\mathcal{L}}$ so that
\begin{align*}
\lVert \widetilde{\mathcal{L}}^{-1} \alpha\rVert\le c_{\mathcal{L}}\lVert p_2\alpha\rVert_\infty\qquad p_2(y)=\frac{(2+y^2)^{5/2}}{3\sqrt{2}y(4+y^2)},
\end{align*}
for $c_{\mathcal{L}}=180$ and any function $\alpha$ with a finite
$\lVert p_2\alpha\rVert_\infty$.  In addition, for the constructed
operator $\widetilde{\mathcal L}$, the difference
$\widetilde {\mathcal{L}}-{\mathcal{L}}$ is small (see the estimate in
the next point).

\item We show that for $\delta,\gamma\in X$
\begin{align*}
\lVert p_2 {\mathcal{R}(\widetilde f_0)}\rVert_\infty &\le c_{\mathcal{R}}\\
\lVert p_2 \mathcal N(\delta)\rVert_\infty & \le c_{\mathcal N} \lVert \delta\rVert^2\\
\lVert p_2 (\mathcal N(\delta)-\mathcal N(\gamma))\rVert_\infty & \le c_{\mathcal N}(\lVert\delta\rVert+\lVert\gamma\rVert) \lVert \delta-\gamma\rVert\\
\lVert p_2(\widetilde {\mathcal{L}}-{\mathcal{L}})\delta\rVert_\infty & \le c_{\widetilde {\mathcal{L}}} \lVert\delta\rVert
\end{align*}
with constants $c_{\mathcal{R}}=10^{-6}$,
$c_{\mathcal N}=4$ and $c_{\widetilde {\mathcal{L}}}=4\cdot 10^{-5}$.

\item Combining these results we prove that $\widetilde {\mathcal{K}}$ has a
unique fixed point in $X$.  This follows immediately from the
estimates from points (1) and (2) and from the contraction mapping
principle.  Indeed, for all $\delta,\gamma \in X$ we have
\begin{align*}
  \lVert \widetilde {\mathcal{K}}(\delta)\rVert&
                                     \le c_{\mathcal{L}}(c_{\mathcal{R}}+c_{\widetilde {\mathcal{L}}}\lVert
                                     \delta\rVert+c_{\mathcal N}\lVert \delta \rVert^2)\\
                                     &\le 180(10^{-6}+(4\cdot 10^{-5})\cdot (5\cdot 10^{-4})+4\cdot (5\cdot 10^{-4})^{2})
  \\&=3.636\cdot 10^{-4}< 5\cdot 10^{-4},
\end{align*}
so $\widetilde {\mathcal{K}}$ maps back into $X$, and $\widetilde {\mathcal{K}}$ is a
contraction because
\begin{align*}
\lVert \widetilde {\mathcal{K}}(\delta)-\widetilde {\mathcal{K}}(\gamma)\rVert&=\lVert\widetilde {\mathcal{L}}^{-1}
(\mathcal N(\delta)-\mathcal N(\gamma)+(\widetilde {\mathcal{L}}-{\mathcal{L}})(\delta-\gamma))\rVert\\
&\le c_{\mathcal{L}}(c_{\mathcal N}(\lVert \delta \rVert + \lVert \gamma\rVert)+c_{\widetilde {\mathcal{L}}})\lVert \delta-\gamma\rVert\\
&\le 180(4\cdot 10^{-3}+4\cdot 10^{-5})\lVert \delta-\gamma\rVert\\
&=0.7272\lVert \delta-\gamma\rVert.
\end{align*}
Consequently, by the contraction mapping principle and elementary regularity theory there exists a $\delta_0 \in X\cap C^{\infty}([0,\infty))$ that solves
\eqref{eq:3}.  But such a $\delta_0$ must also solve \eqref{eq:2a}
and thus, $f_0:=\widetilde f_0+\delta$ must solve
\eqref{eq:ss} and $f_0\in C^\infty([0,\infty))$.
\end{enumerate}

\begin{remark} The main difficulty in the above procedure is to
determine the approximate solution $\widetilde f_0$ and the operator
$\widetilde {\mathcal{L}}$ such that the constants $c_{\mathcal{R}}$ and
$c_{\widetilde {\mathcal{L}}}$ are small enough for $\widetilde {\mathcal{K}}$ to be a
contraction. In contrast, we do not have much influence on the
constants $c_{\mathcal N}$ and $c_{\mathcal{L}}$; they are the constants of our
problem.  \end{remark}

\begin{remark} Because of the complicated form of the approximations
$\widetilde f_0$ and $\widetilde {\mathcal{L}}$, our proof relies on computer
algebra and rigorous computer-assisted methods for estimating
rational functions (namely the method of interval
arithmetic).  These methods will be described in detail in the
following sections.  \end{remark}

\subsection{Estimate for the remainder term ${\mathcal{R}(\widetilde f_0)}$}
\label{sec-3-1}

The remainder term is the simplest one to analyze so we shall use
it to demonstrate the method of interval arithmetic, which we
shall use extensively throughout the paper.  We defined the
remainder term as
\begin{align*}
{\mathcal{R}(\widetilde f_0)}=\widetilde f_0''+\left(\frac{2}{y}-\frac{y}{2}\right)\widetilde f_0'-\frac{1}{y^2}\sin(2\widetilde f_0)
\end{align*}
with $\widetilde f_0$ given explicitly as
\begin{align*}
\widetilde f_0(y)=2\arctan(g_0(y)),\qquad g_0(y)=\sum_{n=0}^{14}(f_0)_n T_{2n+1}\left(\frac{y}{\sqrt{2+y^2}}\right).
\end{align*}
Now we argue that the remainder term, multiplied by the weight
$p_2$, is a rational function of $y$ (we will need this fact later
on to apply the interval arithmetic bounds).  The main difficulty
lies in convincing oneself that the square root in the
definition of $g_0$ eventually does not show up in the expression
$p_2{\mathcal{R}(\widetilde f_0)}$.

Let us start by writing the remainder term in terms of $g_0$
\begin{align}
\label{eq:5}
{\mathcal{R}(\widetilde f_0)}=\frac{2}{1+g_0^2}\left(g_0''+\left(\frac{2}{y}-\frac{y}{2}-\frac{2g_0 g_0'}{1+g_0^2}\right)g_0'-\frac{2g_0(1-g_0^2)}{y^2(1+g_0^2)}\right).
\end{align}
Now, because $T_{2n+1}(x)=x P_n(x^2)$, with $P_n$ being some
polynomial of order $n$, we can write the function $g_0$ as
$g_0(y)=\frac{y}{\sqrt{2+y^2}}Q(y^2/(2+y^2))$ (with some
polynomial $Q$). Consequently, we factored out the square root.  It is
now straightforward to see that the square root can be eventually
factored out of ${\mathcal{R}(\widetilde f_0)}$ in a similar fashion.  Going back
to the definition of the weight
$p_2$ we obtain the representation
\begin{align*}
p_2{\mathcal{R}(\widetilde f_0)}(y)=\frac{U(y^2/(2+y^2))}{V(y^2/(2+y^2))}
\end{align*}
where $U$ and $V$ are polynomials with rational coefficients.  As the
last step, we compactify the domain to the interval $[0,1]$ by
replacing $y\in [0,\infty)$ with $x=y^2/(2+y^2)\in [0,1]$. 
We refrain from writing down the polynomials $U$ and $V$ explicitly as they are of order $44$ and $59$, respectively, with large integer coefficients.

We now run into the core of the problem: how to estimate such a
complicated rational function?  Depending on how rigorous we want
to be, such an estimate might be straightforward and produced by
simply plotting the graph of the function or incredibly difficult if
we decide to work on the rational function directly and show the
bound explicitly.  We decided on an approach that is almost as
simple as plotting the function but still rigorous: interval
arithmetic.

Interval arithmetic is essentially a way to find bounds on the
range of a function on a given interval.  Say we are interested in
estimating the range of the function $f(x)=x-x^2$ on an interval
$x\in [0,1]$ (naturally, one can do this explicitly but the point
is to illustrate the method).  In the interval arithmetic approach
we first compute the range of $x$, which is $[0,1]$, then we
compute the range of $-x^2$, which is $[-1,0]$.  Now the key
observation is that the range of a sum has to be contained in the
sum of the ranges (defined as $[a,b]+[c,d]:=[a+c,b+d]$), that is
\begin{align*}
f(x)\in [0,1]+[-1,0]=[-1,-1],\qquad x\in[0,1].
\end{align*}
Indeed, one finds that the exact range of the function $f$ is
$[0,1/4]\subset[-1,1]$.

One can define the remaining operations on intervals as follows.
\begin{definition}[Interval arithmetic]
\label{def:ia}
For $a,b,c,d\in \mathbb R$ we define
\begin{align*}
[a,b]+[c,d]&:=[a+c,b+d]\\
[a,b]-[c,d]&:=[a-d,b-c]\\
[a,b]\cdot [c,d]&:=[\min\{ac,ad,bc,bd\},\max\{ac,ad,bc,bd\}]\\
\frac{[a,b]}{[c,d]}&:=[a,b]\cdot\left[\frac{1}{d},\frac{1}{c}\right],\quad 0\notin [c,d]
\end{align*}
(here we assume that the intervals on the left hand side are
nonempty, so that $a\le b$ and $c\le d$).  Moreover, the
operations mixing intervals and real numbers can be included by
interpreting $a\in\mathbb R$ as $[a,a]$.
\end{definition}

It is straightforward to check that these definitions lead to the
following statement.

\begin{theorem}
\label{th:ia}
Let $*$ be any of the operations defined in \ref{def:ia} and let $x\in [a,b]$ and $y\in [c,d]$.
Then we have $x*y\in [a,b]*[c,d]$.
\end{theorem}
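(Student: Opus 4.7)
The plan is to prove Theorem \ref{th:ia} by dispatching each of the four operations in Definition \ref{def:ia} separately, since the claim is purely about the inclusion of the range of a two-variable real function in an interval built from the endpoints.

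First I would handle addition and subtraction, which are essentially immediate from the order axioms. For $x\in[a,b]$ and $y\in[c,d]$ the inequalities $a\le x\le b$ and $c\le y\le d$ combine to give $a+c\le x+y\le b+d$, matching the definition of $[a,b]+[c,d]$. For subtraction, one multiplies the second pair of inequalities by $-1$ (reversing them) to get $-d\le -y\le -c$, then adds to obtain $a-d\le x-y\le b-c$, matching $[a,b]-[c,d]$.

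The main case is multiplication, where monotonicity in one variable is not enough because the sign of the partial derivative depends on the sign of the other variable. The cleanest route is to observe that the function $\Phi(x,y)=xy$ is continuous on the compact rectangle $R=[a,b]\times[c,d]$, so attains its extrema there, and is affine in each variable separately. Fixing $y$, the map $x\mapsto xy$ is affine and hence its extrema over $[a,b]$ are achieved at $x\in\{a,b\}$; the same argument with roles swapped forces the two-variable extrema to be attained at one of the four corners $(a,c),(a,d),(b,c),(b,d)$. Therefore the range of $\Phi$ on $R$ is contained in $[\min\{ac,ad,bc,bd\},\max\{ac,ad,bc,bd\}]$, which is exactly $[a,b]\cdot[c,d]$. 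Finally, division reduces to multiplication: under the hypothesis $0\notin[c,d]$, the function $y\mapsto 1/y$ is continuous and strictly decreasing on $[c,d]$, so $1/y\in[1/d,1/c]$; then $x/y=x\cdot(1/y)\in[a,b]\cdot[1/d,1/c]$ by the multiplication case already proved.

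The only mildly subtle point is the multiplication step, because one is tempted to work out sign cases (both intervals positive, straddling zero, etc.) and the bookkeeping gets unpleasant. The affine-in-each-variable observation avoids all of that and makes the corner formula in Definition \ref{def:ia} transparent; this will be the main substantive step of the write-up, while the other three operations are one-line consequences of the order axioms.
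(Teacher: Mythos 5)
The paper does not actually provide a proof of Theorem~\ref{th:ia}; it simply states that the verification is straightforward and leaves it to the reader. Your write-up correctly supplies that routine verification, and the affine-in-each-variable observation for the multiplication case is a clean way to see that the extrema of $(x,y)\mapsto xy$ on the rectangle $[a,b]\times[c,d]$ sit at the four corners, avoiding the tedious sign casework one might otherwise fall into. The reduction of division to multiplication via $1/y\in[1/d,1/c]$ (valid for both $c,d>0$ and $c,d<0$ since $y\mapsto 1/y$ is decreasing on each half-line) is also correct. So there is nothing to compare against in the paper itself; your proposal simply fills a gap the authors explicitly chose to leave open, and does so correctly.
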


This theorem allows us to extend the operations on numbers,
like in our example $x-x^2$, to operations on sets.  Consequently,
$f(x)=x-x^2$ can be interpreted as either a function on real
numbers giving $f(1)=0$, or a function on intervals giving
$f([0,1])=[-1,1]$.  Thanks to Theorem \ref{th:ia}, for every
$x\in[0,1]$, we have $f(x)\in f([0,1])=[-1,1]$.

There are, however, some pitfalls one should be aware of when using
interval arithmetic.  The first problem (also called the dependency
problem) is that the resulting bound strongly depends on the algebraic
form of the expression.  For example, if we write $f(x)$ as $x(1-x)$
the bound becomes $f([0,1])=[0,1]$, so rewriting the expression might
improve or degrade the estimate; the resulting bound will still be
rigorous but it may simply be less efficient.  Next, in most cases a
single interval is insufficient to obtain a satisfactory estimate and
splitting the interval into two or more subintervals will often
improve the result.  For example we can write the interval $[0,1]$ as
$[0,1/2]\cup[1/2,1]$, effectively splitting the domain into two parts;
so for $x\in [0,1]$ we have
$f(x)=x(1-x)\in f([0,1/2])\cup f([1/2,1])=[0,1/2]\cup
[0,1/2]=[0,1/2]$, which is closer to the optimal estimate.  For a
comprehensive summary of these and other aspects of interval
arithmetic methods, the reader is referred to \cite{Tucker} or
\cite{Tucker2011}.

We remove the ambiguity coming from the dependency problem by writing
each rational function $R$, which we want to estimate on the domain
$x\in[0,1]$, in so-called Bernstein form
\begin{align*}
R(x)=\frac{\sum_{i=0}^n a_i x^i(1-x)^{n-i}}{\sum_{j=0}^m b_j x^j(1-x)^{m-j}}
\end{align*}
where $a_i, b_j\in \mathbb Q$.  This representation is unique provided
the fraction is reduced.  The Bernstein form seems to improve the
estimates coming from interval arithmetic (note that writing $f(x)$
from our example as $x(1-x)$ is actually rewriting it in Bernstein
form).  Another benefit of the Bernstein form is that it is trivial to
see if the denominator of $R$ is strictly positive by simply checking
if all the coefficients $b_j$ are nonnegative (still, at least one has
to be positive).

Then, we mince the domain of our function by bisecting each interval
$[a,b]$, for which the interval $R([a,b])$ turned out to be too broad,
and then we take a union of the resulting estimates.  Specifically, we
rewrite the offending interval as $[a,b]=[a,(a+b)/2]\cup [(a+b)/2,b]$
and we use $R([a,b])\subset R([a,(a+b)/2])\cup R([(a+b)/2,b])$.

Unfortunately, the rational functions we are dealing with are too
complex and the number of subintervals too large to perform all the
computations by hand or even explicitly include them in this
paper. However, by following the procedure just explained, it is
straightforward to verify our claims using any suitable software
package. In addition, there exists an on-line supplement to this article which consists of a Mathematica Notebook that contains
all the computations.  Also, having discussed the method in detail
here, in the rest of this paper we shall use the interval arithmetic
freely.

Running the described algorithm we get
\begin{align*}
\lVert p_2{\mathcal{R}(\widetilde f_0)}\rVert_\infty\le c_{\mathcal{R}},\qquad c_{\mathcal{R}}=10^{-6}.
\end{align*}

\subsection{Estimates for the nonlinear term $\mathcal N(\delta)$}
\label{sec-3-2}

The nonlinear term $\mathcal N(\delta)$ was defined as
\begin{align*}
\mathcal N(\delta)&=\frac{1}{y^2}(2\cos(2{\widetilde f_0})\delta-2\cos(2{\widetilde f_0}+\delta)\sin(\delta))\\
&=\frac{2}{y^2}\,(\cos(2{\widetilde f_0})(\delta-\sin(\delta))+4\cos(\delta/2)\sin(2{\widetilde f_0}+\delta/2)\sin^2(\delta/2)).
\end{align*}
so that
\begin{align*}
\lvert \mathcal N(\delta)\rvert
\le \frac{2}{y^2}\left(\frac{1}{6}\lvert \delta\rvert^3+\lvert \delta\rvert^2\left(\lvert \sin(2{\widetilde f_0})\rvert+\frac{1}{2}\lvert\delta\rvert\right)\right)
=\frac{2}{y^2}\lvert \delta\rvert^2\left(\lvert \sin(2{\widetilde f_0})\rvert+\frac{2}{3}\lvert\delta\rvert\right).
\end{align*}
This leads to
\begin{align*}
\lvert p_2 \mathcal N(\delta)\rvert\le (p_1\delta)^2\left(\left\lvert \frac{2p_2}{p_1^2y^2} \sin(2{\widetilde f_0})\right\rvert+\frac{4p_2}{3p_1^3y^2}\lvert p_1 \delta\rvert\right)
\end{align*}
and now, according to the previous section, interval arithmetic
provides the following bound on the first term,
\begin{align}
\label{eq:2}
\left\lvert \frac{2p_2}{p_1^2y^2} \sin(2{\widetilde f_0})\right\rvert\le 3.9,
\end{align}
while the second term can be bounded explicitly using the
definitions of the weights $p_1$, $p_2$ and $p_3$,
\begin{align}
\label{eq:4}
\frac{4p_2}{3 p_1^3y^2}=\frac{8(2+y^2)}{9(4+y^2)}\le 1.
\end{align}
In summary, we obtain
\begin{align*}
\lVert p_2 \mathcal N(\delta)\rVert_\infty\le \lVert\delta\rVert^2(3.9+\lVert\delta\rVert).
\end{align*}
Now, if $\delta\in X$, we have $\lVert \delta\rVert\le 5\cdot 10^{-4}$ so that
\begin{align*}
\lVert p_2 \mathcal N(\delta)\rVert_\infty\le c_{\mathcal N}\lVert\delta\rVert^2,\qquad c_{\mathcal N}=4.
\end{align*}

A similar reformulation can be carried out for the difference
\begin{align*}
\mathcal N(\delta)-\mathcal N(\gamma)=
&\frac{2}{y^2}(\cos(2{\widetilde f_0})(\delta-\gamma)-\cos(2{\widetilde f_0}+\delta)\sin(\delta)+\cos(2{\widetilde f_0}+\gamma)\sin(\gamma))\\
=&\frac{2}{y^2}(2\sin((\delta+\gamma)/2)\sin(2\widetilde f_0+(\delta+\gamma)/2)\sin(\delta-\gamma)\\
&+\cos(2\widetilde f_0)((\delta-\gamma)-\sin(\delta-\gamma)) ),
\end{align*}
Given the new form of the difference we can apply essentially the
same estimates as for the term $\mathcal N(\delta)$ leading to
\begin{align*}
\lvert p_2 (\mathcal N(\delta)-\mathcal N(\gamma))\rvert\le \lvert p_1(\delta-\gamma)\rvert(\lvert p_1\delta\rvert+\lvert p_1\gamma\rvert)\left(\frac{2p_2}{y^2p_1^2}\lvert \sin(2\widetilde f_0)\rvert+\frac{4p_2}{3y^2p_1^3}(\lvert p_1\delta \rvert+\lvert p_1\gamma\rvert)\right)
\end{align*}
and then, again using \eqref{eq:2} and \eqref{eq:4}, to
\begin{align*}
\lVert p_2(\mathcal N(\delta)-\mathcal N(\gamma))\rVert_\infty\le\lVert \delta-\gamma\rVert(\lVert\delta\rVert+\lVert\gamma\rVert)(3.9+\lVert\delta\rVert+\lVert\gamma\rVert).
\end{align*}
But for $\delta,\gamma\in X$ we have $\lVert\delta\rVert+\lVert\gamma\rVert\le 2\cdot 5\cdot 10^{-4}$ so
\begin{align*}
\lVert p_2(\mathcal N(\delta)-\mathcal N(\gamma))\rVert_\infty\le c_{\mathcal N}\lVert \delta-\gamma\rVert(\lVert\delta\rVert+\lVert\gamma\rVert).
\end{align*}

\subsection{The approximate operator $\widetilde {\mathcal{L}}$}
\label{sec-3-3}

We can write an inverse of ${\mathcal{L}}$ as
\begin{align*}
{\mathcal{L}}^{-1}\delta(y)=\int_0^\infty G(x,y)\delta(x)\,dx
\end{align*}
with the Green's function defined as
\begin{align*}
G(x,y)&
=\frac{-1}{W(w_0,w_1)(x)}
\begin{cases}
w_0(y) w_1(x) & y\leq x\\
w_0(x) w_1(y) & x\leq y
\end{cases}
\end{align*}
where $W(w_0,w_1)=w_0w_1'-w_0'w_1$.
The functions $w_0$ and $w_1$ satisfy the differential equation
\begin{align*}
{\mathcal{L}} w_0={\mathcal{L}} w_1=0
\end{align*}
with boundary conditions
\begin{align}
\label{eq:w01-boundary}
w_0(y)&=y+\mathcal O(y^3),& y&\to 0 \nonumber \\
w_1(y)&=1+\mathcal O(y^{-2}),& y&\to \infty.
\end{align}
Naturally, these fundamental solutions are unknown in closed
form.

Assume though, that we can find a pair of approximate solutions
$\widetilde w_0\approx w_0$ and $\widetilde w_1\approx w_1$ with the
same boundary conditions as the exact solutions.  Then the solutions
$(\widetilde w_0,\widetilde w_1)$ uniquely determine an operator
$\widetilde {\mathcal{L}}={\mathcal{L}}+P\partial_y+Q$ by demanding
that
\begin{align*}
\widetilde {\mathcal{L}}\widetilde w_0=\widetilde {\mathcal{L}}\widetilde w_1=0.
\end{align*}
Under such conditions, the coefficients $P$ and $Q$ are given by
\begin{align*}
P&=\frac{-1}{W(\widetilde w_0,\widetilde w_1)}\left({\mathcal{L}}\widetilde w_1\widetilde w_0 -{\mathcal{L}}\widetilde w_0 \widetilde w_1\right),\\
Q&=\frac{1}{W(\widetilde w_0,\widetilde w_1)}\left({\mathcal{L}}\widetilde w_1\widetilde w_0'-{\mathcal{L}}\widetilde w_0\widetilde w_1'\right).
\end{align*}
With $\widetilde w_0$ and $\widetilde w_1$ known explicitly, the
operator $\widetilde {\mathcal{L}}$ has a closed form Green's function
\begin{align*}
\widetilde G(x,y)&
=\frac{-1}{W(\widetilde w_0,\widetilde w_1)(x)}
\begin{cases}
\widetilde w_0(y)\widetilde w_1(x) & y\leq x\\
\widetilde w_0(x)\widetilde w_1(y) & x\leq y
\end{cases}.
\end{align*}

Now let us compute a robust, even if slightly inefficient, estimate on
$c_{\mathcal{L}}$.  For that we are going to need the free part $\mathcal L_0$ of the
operator ${\mathcal{L}}$, defined as
\begin{align*}
{\mathcal{L}}_0\delta(y)&=-\delta''(y)-\left(\frac{2}{y}-\frac{y}{2}\right)\delta'(y)+\frac{2}{y^2}\delta(y),
\end{align*}
which is simply ${\mathcal{L}}$ without the potential term $\widetilde V_{0}$.  Let us
denote the corresponding free Green's function by
\begin{align}
  \label{eq:G0_def}
G_0(x,y)=\frac{-1}{W(v_0,v_1)(x)}
\begin{cases}
v_0(y)v_1(x) & y\leq x\\
v_0(x)v_1(y) & x\leq y
\end{cases},
\end{align}
where $v_0$ and $v_1$ are fundamental solutions of ${\mathcal{L}}_0$
with the same boundary behavior as $w_0$ and $w_1$ (condition
\eqref{eq:w01-boundary}).  In fact, these solutions are known
explicitly,
\begin{align*}
v_0(y)=\frac{3}{y^2}e^{y^2/4}(-y+(2+y^2)D_+(y/2)),\qquad v_1(y)=1+\frac{2}{y^2}
\end{align*}
where $D_+$ is the Dawson integral
\[ D_+(y)=e^{-y^2}\int_0^y e^{x^2}dx. \]
It is not hard to see that $v_0,v_1>0$ on $(0,\infty)$ and
the
Wrońskian is simply
\begin{align}
\label{eq:W0}
\frac{-1}{W(v_0,v_1)(x)}=\frac{1}{6}x^2e^{-\frac{x^2}{4}}.
\end{align}

Now we are ready to take a closer look at the inverse of
$\widetilde {\mathcal{L}}$. We start by rewriting
\begin{align}
\label{eq:L1-estimate}
\begin{split}
p_1(y)\widetilde {\mathcal{L}}^{-1}\alpha(y)
&=\int_0^\infty \widetilde G(x,y)p_1(y)\alpha(x)\,dx\\
&=\int_0^\infty \left(G_0(x,y)\frac{p_1(y)}{p_2(x)}\right)\left(\frac{\widetilde G(x,y)}{G_0(x,y)}\right)p_2(x)\alpha(x)\,dx.
\end{split}
\end{align}
For convenience let us denote the ratio $\widetilde G/G_0$ as
\begin{align*}
H(x,y):=\frac{\widetilde G(x,y)}{G_0(x,y)}.
\end{align*}
Taking the $L^{\infty}(0,\infty)$ norm of \eqref{eq:L1-estimate}, and
pulling some terms out of the integral, we get
\begin{align*}
\lVert p_1\widetilde {\mathcal{L}}^{-1}\alpha\rVert_\infty
\le\sup_{y>0}\left(\int_0^\infty \lvert G_0(x,y)\rvert \frac{p_1(y)}{p_2(x)}\,dx\right)\left\lVert H\rVert_{\infty}\lVert p_2\alpha\right\rVert_\infty \\
=\sup_{y>0}\left(\int_0^\infty G_0(x,y) \frac{p_1(y)}{p_2(x)}\,dx\right)\left\lVert H\rVert_{\infty}\lVert p_2\alpha\right\rVert_\infty,
\end{align*}
where we abbreviate $\|H\|_{\infty}:=\sup_{x,y>0}|H(x,y)|$.  In the
last equality we dropped the absolute value using the positivity of
$G_{0}$.
By our choice of the weights $p_1$ and $p_2$ (this is actually their
defining property), we have ${\mathcal{L}}_0\frac{1}{p_1}=\frac{1}{p_2}$ or,
equivalently,
\begin{align}
  \label{eq:G0_integral}
\int_0^\infty G_0(x,y)\frac{p_1(y)}{p_2(x)}\,dx=1
\end{align}
hence,
\begin{align*}
\lVert p_1\widetilde {\mathcal{L}}^{-1}\alpha\rVert_\infty\le \lVert H\rVert_\infty\lVert p_2\alpha\rVert_\infty.
\end{align*}

Similarly, for the derivative we have
\begin{align}
\label{eq:linear_der}
p_3(y)(\widetilde {\mathcal{L}}^{-1}\alpha)'(y)&=\int_0^\infty
\left(\partial_y G_0(x,y)\frac{p_3(y)}{p_2(x)}\right)H_1(x,y)p_2(x)\alpha(x)\,dx,
\end{align}
where
\begin{align*}
H_1(x,y):=\frac{\partial_y \widetilde G(x,y)}{\partial_y G_0(x,y)}.
\end{align*}
To estimate the above integral, we start with the following trick: we
first take a derivative of \eqref{eq:G0_integral} after dividing it by
$p_{1}$ to get
\begin{align}
  \label{eq:G0y_integral}
\int_0^\infty \partial_{y}G_0(x,y)\frac{p_3(y)}{p_2(x)}\,dx=1,\qquad \frac{1}{p_{3}(y)}=\left(\frac{1}{p_{1}}\right)'(y)=\frac{2\sqrt{2}}{(2+y^{2})^{3/2}}.
\end{align}
The formula on the right above is, in fact, the defining property of
the weight $p_{3}$.
Unfortunately we cannot use the elegant formula
\eqref{eq:G0y_integral} directly, as we did with
\eqref{eq:G0_integral}, because $\partial_{y}G_{0}(x,y)$ changes sign
on the diagonal $\{(x,y): x=y\}$. Indeed, if we go back to the
definitions of fundamental solutions we see that
$v_{0}'(y)=12e^{y^2/4}y^{-3}(y/2-D_{+}(y/2))$ is positive
whereas $v_{1}'(y)=-4/y^{3}$ is
negative. Consequently, we cannot remove the
absolute value in the estimate
\begin{align*}
  \lVert p_3(\widetilde {\mathcal{L}}^{-1}\alpha)'\rVert_{\infty}\le \sup_{y>0}\left (\int_0^\infty
  \left\lvert\partial_y G_0(x,y)\frac{p_3(y)}{p_2(x)}\right\rvert\,dx\right )\lVert H_1\rVert_{\infty}\lVert p_2\alpha\rVert_{\infty}.
\end{align*}
Thus, we have to take a brief detour to deal with the integral
\begin{align}
  \label{eq:G0y_abs}
  \int_0^\infty\left\lvert\partial_y G_0(x,y)\frac{p_3(y)}{p_2(x)}\right\rvert\,dx.
\end{align}
Luckily, this integral can be computed explicitly.
To see this, we use the definition \eqref{eq:G0_def} of the Green's
function to split \eqref{eq:G0y_abs} into two integrals
\begin{align*}
    \int_0^\infty\left\lvert\partial_y G_0(x,y)\frac{p_3(y)}{p_2(x)}\right\rvert\,dx
  &=-\frac{1}{6}\int_0^{y}x^{2}e^{-\frac{x^{2}}{4}} v_{1}'(y) v_{0}(x)\frac{p_3(y)}{p_2(x)}\,dx\\
  &+\frac{1}{6}\int_y^{\infty}x^{2}e^{-\frac{x^{2}}{4}}v_{1}(x) v_{0}'(y)\frac{p_3(y)}{p_2(x)}\,dx\\
  &=:I_{0}(y)+I_{1}(y)
\end{align*}
Above we also used the fact that $v_{0}$, $v_{0}'$ and $v_{1}$, as
well as the weights $p_{2}$ and $p_{3}$, are all nonnegative and
that $v_{1}'$ is negative.  The integral $I_{1}(y)$ can be computed
explicitly,
\begin{align*}
  I_{1}(y)&=\frac{1}{6}v_{0}'(y) p_{3}(y)\int_y^{\infty}x^{2}e^{-\frac{x^{2}}{4}}v_{1}(x)\frac{1}{p_2(x)}\,dx\\
          &=\frac{1}{\sqrt{2}} v_{0}'(y) p_{3}(y)\int_y^{\infty}\frac{x(4+x^2)}{(2+x^{2})^{3/2}} e^{-\frac{x^{2}}{4}}\,dx\\
  &=\sqrt{2} v_{0}'(y) p_{3}(y)\frac{e^{-\frac{y^{2}}{4}}}{\sqrt{2+y^{2}}} .
\end{align*}
By definition, $p_{3}(y)=(2+y^{2})^{3/2}/(2\sqrt{2})$ and thus, we arrive at the final formula
\begin{align*}
  I_{1}(y)=6y^{-3}(2+y^{2})(y/2-D_{+}(y/2))=3-2y^{-1}e^{-\frac{y^{2}}{4}}v_{0}(y).
\end{align*}
In the last equality we used the definition of $v_{0}$ to replace
$D_{+}$ with $v_{0}$.

To compute $I_{0}$, we go back to \eqref{eq:G0y_integral} and observe
that
\begin{align*}
  1=\int_0^\infty \partial_{y}G_0(x,y)\frac{p_3(y)}{p_2(x)}\,dx=-I_{0}(y)+I_{1}(y).
\end{align*}
Consequently, $I_{0}(y)=I_{1}(y)-1$ and thus,
\begin{align*}
  \int_0^\infty\left\lvert\partial_y G_0(x,y)\frac{p_3(y)}{p_2(x)}\right\rvert\,dx=I_{0}(y)+I_{1}(y)=
  2I_1(y)-1=5-4y^{-1}e^{-\frac{y^{2}}{4}}v_{0}(y)\le 5.
\end{align*}
This yields
\begin{align*}
\lVert p_3(\widetilde {\mathcal{L}}^{-1}\alpha)'\rVert_\infty\le 5\lVert H_1\rVert_\infty \lVert p_2\alpha\rVert_\infty
\end{align*}
and thus,
\begin{align*}
\lVert \widetilde {\mathcal{L}}^{-1}\alpha\rVert&=\lVert p_1\widetilde {\mathcal{L}}^{-1}\alpha\rVert_\infty+\lVert p_3(\widetilde {\mathcal{L}}^{-1}\alpha)'\rVert_\infty\\
&\le (\lVert H\rVert_\infty+5\lVert H_1\rVert_\infty)\lVert p_2\alpha\rVert_\infty.
\end{align*}

To estimate $\lVert H\rVert_\infty$, we write 
\begin{align*}
H(x,y)=\frac{W(v_0,v_1)(x)}{W(\widetilde w_0,\widetilde w_1)(x)}
\begin{cases}
h_0(y) h_1(x) & y\leq x\\
h_0(x) h_1(y) & x\leq y
\end{cases}
\end{align*}
where
\begin{align*}
h_0(x)=\frac{\widetilde w_0(x)}{v_0(x)},\qquad     h_1(x)=\frac{\widetilde w_1(x)}{v_1(x)}.
\end{align*}
At this point it remains to pick a particular pair
$(\widetilde w_0,\widetilde w_{1})$, approximating $(w_{0},w_{1})$,
which leads to satisfactory estimates on
$\widetilde {\mathcal{L}}-{\mathcal{L}}$ (see section \ref{sec-L-L-const}
below).  Our numerically inspired guess is 
\begin{align}
\label{eq:w0-definition}
\widetilde w_0(y) &:=  v_0(y)\sum_{n=0}^{44} (w_0)_n T_{2n}\left(\frac{y}{\sqrt{y^2+4}}\right) \\
\label{eq:w1-definition}
\widetilde w_1(y) &:=  v_1(y)\sum_{n=0}^{37} (w_1)_n T_{n}\left(\frac{y-2}{y+2}\right)
\end{align}
with the coefficients $(w_{0})_n$ and $(w_{1})_n$ presented in Tables
\ref{tab:w0} and \ref{tab:w1} respectively.  A cautious reader will
notice that the definition \eqref{eq:w1-definition} is not complete:
the last two coefficients in \eqref{eq:w1-definition}, $(w_1)_{36}$
and $(w_1)_{37}$, are missing from Table \ref{tab:w1}.  To remove this
ambiguity we impose two additional conditions that fix the last two
coefficients; we demand that
\begin{align*}
\frac{d}{dy}\frac{\widetilde w_1(y)}{v_1(y)}\bigg|_{y=0} = 0,\qquad    \frac{d}{dy}\frac{\widetilde w_1(y)}{v_1(y)}\bigg|_{y=\infty} = 0.
\end{align*}
These two conditions ensure that $\widetilde w_1$ has the correct endpoint behavior.
 Both
coefficients $(w_1)_{36}$ and $(w_1)_{37}$ contribute only a
correction of order $10^{-12}$ to the whole sum.  

We have $\|h_j\|_\infty \le 1.01$ for $j=0,1$ (see the appendix), so
\begin{align*}
\lVert H\rVert_\infty\le 1.01^2\left\lVert\frac{W(v_0,v_1)}{W(\widetilde w_0,\widetilde w_1)}\right\rVert_\infty.
\end{align*}
The contents of the norm on the right hand side is a function of a
single variable and thus it can be easily estimated by the method of
interval arithmetic.  After performing the computations we arrive at
\begin{align*}
\lVert H\rVert_\infty\le 1.01^2\cdot 28<30.
\end{align*}
The function $H_1$ can be estimated in a similar fashion. Indeed, we have
\begin{align*}
H_1(x,y)=\frac{W(v_0,v_1)(x)}{W(\widetilde w_0,\widetilde w_1)(x)}
\begin{cases}
\frac{\widetilde w_0'(y)}{v_0'(y)} h_1(x)  & y\leq x\\
h_0(x) \frac{\widetilde w_1'(y)}{v_1'(y)} & x< y.
\end{cases}
\end{align*}
Again, by interval arithmetic, we show that
$\|\widetilde w_j'/v_j'\|_{\infty}\le 1.01$ for $j=0,1$ and then the same
estimate as for $\lVert H\rVert_\infty$ follows,
\begin{align*}
\lVert H_{1}\rVert_\infty\le 1.01^2\cdot 28<30.
\end{align*}
Putting everything together, we arrive at
\begin{align*}
\lVert \widetilde {\mathcal{L}}^{-1}\alpha\rVert\le (\lVert H\rVert_\infty+5\lVert H_1\rVert_\infty)\lVert p_2\alpha\rVert_\infty\le c_{\mathcal{L}}\lVert p_2\alpha\rVert_\infty,\qquad c_{\mathcal{L}}:=(1+5)\cdot 30=180.
\end{align*}

\subsection{Linear part and the constant $c_{\widetilde {\mathcal{L}}}$}
\label{sec-L-L-const}

We have already constructed the operator $\widetilde {\mathcal{L}}$ and its inverse,
now it remains to show that the difference $\widetilde{\mathcal{L}}-{\mathcal{L}}$ is small. To this end we estimate
\begin{align*}
p_2(\widetilde {\mathcal{L}}-{\mathcal{L}})\delta=p_2(P\delta'+Q\delta)
\end{align*}
where $Q$ and $P$ were given in the previous section as
\begin{align}
  \label{eq:21}
  \begin{split}
  P&=\frac{-1}{W(\widetilde w_0,\widetilde w_1)}\left({\mathcal{L}}\widetilde w_1\widetilde w_0 -{\mathcal{L}}\widetilde w_0 \widetilde w_1\right),\\
  Q&=\frac{1}{W(\widetilde w_0,\widetilde w_1)}\left({\mathcal{L}}\widetilde w_1\widetilde w_0'-{\mathcal{L}}\widetilde w_0\widetilde w_1'\right),
  \end{split}
\end{align}
with $\widetilde w_{0}$ and $\widetilde w_{1}$ defined in
\eqref{eq:w0-definition} and \eqref{eq:w1-definition},
respectively.  Note that \eqref{eq:21} implies
\begin{align*}
\lVert p_2(\widetilde {\mathcal{L}}-{\mathcal{L}})\delta\rVert_\infty
&\le \left(\left\lVert \frac{p_2}{p_3} P\right\rVert_\infty\lVert p_3\delta'\rVert_\infty
+\left\lVert\frac{p_2}{p_1}Q\right\rVert_\infty\lVert p_1\delta\rVert_\infty\right)\\
&\le \left(\left\lVert \frac{p_2}{p_3} P\right\rVert_\infty
+\left\lVert\frac{p_2}{p_1}Q\right\rVert_\infty\right)\lVert \delta\rVert,
\end{align*}
so it is enough to estimate each of the two terms in the big
parenthesis. With
the help of interval arithmetic we find that they obey
\begin{align*}
\left\lVert \frac{p_2}{p_3} P\right\rVert_\infty&\le c_P=2\cdot 10^{-5},\qquad\\
\left\lVert\frac{p_2}{p_1}Q\right\rVert_\infty&\le c_Q=2\cdot 10^{-5}.
\end{align*}
and therefore,
\begin{align*}
\lVert p_2(\widetilde {\mathcal{L}}-{\mathcal{L}})\delta\rVert_\infty\le c_{\widetilde {\mathcal{L}}}\lVert \delta\rVert,\qquad c_{\widetilde {\mathcal{L}}}:=c_P+c_Q=4\cdot 10^{-5}.
\end{align*}

\section{Linear stability of $f_0$}
\label{sec-4}

\noindent In this section we prove that $f_0$ is linearly stable, apart from the
gauge mode, in the sense that the self-adjoint operator
$\mathcal A_0$, defined in \eqref{eq:A}, has no eigenvalues in the
interval $(-\infty,0]$ other then $\lambda_0=-1$.  We divide the
proof into two steps.  First, we prove that $W_{0}$, defined
as the solution of the initial value problem
\begin{align}
  \label{eq:ivp}
  \begin{split}
    {\mathcal{L}}W_{\lambda}=\lambda W_{\lambda},\qquad
    W_{\lambda}(0)=0,\qquad W_{\lambda}'(0)=1
  \end{split}
\end{align}
with $\lambda=0$, has exactly one zero on $(0,\infty)$ (note that $W_{0}$
coincides with the fundamental solution $w_{0}$ up to normalization).  Then, we prove that
$W_{\lambda_{0}}(y):=y f_0'(y)$ is an eigenfunction of $\mathcal A_{0}$
to the eigenvalue $\lambda_0=-1$.  Finally, we apply the Sturm
oscillation theorem, which relates the number of zeros of $W_0$ on $(0,\infty)$
to the number of eigenvalues below zero.  Since $W_0$ has exactly
one zero, there is only a single eigenvalue below zero.  But this must
be the eigenvalue $\lambda_0=-1$ to the gauge mode $W_{\lambda_{0}}$.

\subsection{Counting the zeros}
\label{sec-4-1}

\begin{lemma} \label{lem:w0} The solution $W_0$ of the initial value
  problem \eqref{eq:ivp} with $\lambda=0$ has exactly one zero in $(0,\infty)$. Moreover,
  \begin{align*}
    q(y):=W_0(y)/y
  \end{align*}
  is negative and decreasing for $y\geq 3$. \end{lemma}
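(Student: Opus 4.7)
The strategy mirrors Section 2: replace $W_0$ by an explicit approximation, bound the error via a contraction argument driven by $\widetilde{\mathcal{L}}$, and conclude the required pointwise statements by interval arithmetic. Concretely, I would reuse the explicit function $\widetilde{w}_0$ from \eqref{eq:w0-definition} (which, by its construction via $v_0$ and Chebyshev polynomials, matches $\widetilde{w}_0(y)=y+O(y^3)$ near $0$ up to a harmless rescaling) and set $W_0=\widetilde{w}_0+\eta$. The equation for $\eta$ is linear,
\begin{align*}
\widetilde{\mathcal{L}}\eta = -\mathcal{L}\widetilde{w}_0 + (\widetilde{\mathcal{L}}-\mathcal{L})\eta,
\end{align*}
so a single application of the inversion estimate $\lVert\widetilde{\mathcal{L}}^{-1}\alpha\rVert\leq 180\,\lVert p_2\alpha\rVert_\infty$, together with the bound on $\widetilde{\mathcal{L}}-\mathcal{L}$ from Section 2.4 and an interval-arithmetic estimate of the residual $\mathcal{L}\widetilde{w}_0$, produces an explicit smallness bound on $\eta$ in the norm $\lVert\cdot\rVert$.

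I would then switch to the variable $q(y):=W_0(y)/y$. Substituting $W_0=yq$ into $\mathcal{L}W_0=0$ gives the closed-form ODE
\begin{align*}
q''(y) = \left(\frac{y}{2}-\frac{4}{y}\right)q'(y) + \left(\frac{1}{2}+V_0(y)\right)q(y),
\end{align*}
and both coefficients are strictly positive on $[3,\infty)$: indeed $y/2-4/y\geq 1/6$ for $y\geq 3$, and since $|V_0(y)|=4\sin^2(f_0(y))/y^2\leq 4/y^2\leq 4/9$ one has $\tfrac{1}{2}+V_0(y)\geq 1/18>0$. This positivity is the structural fact that drives the monotonicity claim.

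Using interval arithmetic on $\widetilde{w}_0$, combined with the error bound on $\eta$, I would verify the two pointwise inequalities $q(3)<0$ and $q'(3)<0$. To propagate them, define $Y^*:=\sup\{Y\geq 3 : q<0 \text{ and } q'<0 \text{ on }[3,Y]\}$ and suppose for contradiction $Y^*<\infty$. The case $q(Y^*)=0$ is excluded because $q$ is strictly decreasing on $[3,Y^*)$ with $q(3)<0$. The case $q'(Y^*)=0$ is excluded by the ODE: it gives $q''(Y^*)=(\tfrac{1}{2}+V_0(Y^*))q(Y^*)<0$, whereas $q'\to 0^-$ at $Y^*$ forces $q''(Y^*)\geq 0$, a contradiction. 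Hence $Y^*=\infty$, which is exactly the second assertion of the lemma.

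For the zero count, this forward-propagation argument gives $W_0<0$ on $[3,\infty)$, while the initial condition $W_0(y)=y+O(y^3)$ gives $W_0>0$ on $(0,\varepsilon)$ for some $\varepsilon>0$, so by the intermediate value theorem $W_0$ has at least one zero in $(0,3)$. Uniqueness of the zero on $(0,3]$ is certified by interval arithmetic on $\widetilde{w}_0$ together with the $\eta$-bound. I expect the main obstacle to be obtaining sufficiently tight bounds on $\eta(3)$ and $\eta'(3)$ to rigorously certify the inequalities $q(3)<0$ and $q'(3)<0$; this may force a refinement or relocation of the approximant $\widetilde{w}_0$ near $y=3$ relative to the one used in Section 2.3, where it was tuned for a different purpose.
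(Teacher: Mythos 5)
Your factorization of the $q$-ODE and the forward-propagation argument on $[3,\infty)$ are correct and equivalent to what the paper does (the paper writes the coefficient of $q$ as $\tfrac{4}{y^2}\cos^2(\widetilde f_0)$ after absorbing the $\tfrac{1}{2}-\tfrac{4}{y^2}$ piece into the $q'$-bracket, but either grouping yields $q''<0$ whenever $y\geq 3$, $q<0$, $q'<0$). Note one small slip: the IVP \eqref{eq:ivp} involves $\mathcal{L}$, whose potential is $\widetilde V_0(y)=-4\sin^2(\widetilde f_0(y))/y^2$, not $V_0$ built from $f_0$; since your estimate uses only $\sin^2\leq 1$ this does not affect the argument, but it should be stated precisely.

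The genuine gap is in the error-control step. You propose to solve
$\widetilde{\mathcal{L}}\eta=-\mathcal{L}\widetilde w_0+(\widetilde{\mathcal{L}}-\mathcal{L})\eta$
and bound $\eta$ by applying $\lVert\widetilde{\mathcal{L}}^{-1}\alpha\rVert\le 180\lVert p_2\alpha\rVert_\infty$ to the residual $\mathcal{L}\widetilde w_0$. But this residual is \emph{not} admissible: since $\widetilde{\mathcal{L}}\widetilde w_0=0$, one has $\mathcal{L}\widetilde w_0=-(P\widetilde w_0'+Q\widetilde w_0)$, and $\widetilde w_0$ is built from $v_0(y)\sim 12\,y^{-3}e^{y^2/4}$, so $p_3\widetilde w_0'$ and $p_1\widetilde w_0$ grow like $e^{y^2/4}$. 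Consequently $\lVert p_2\,\mathcal{L}\widetilde w_0\rVert_\infty=\infty$ and the inversion bound cannot be applied; no amount of interval arithmetic fixes this, and no refinement of $\widetilde w_0$ near $y=3$ helps either. The obstacle you anticipated (precision of the error at $y=3$) is real but secondary; the primary obstacle is at $y=\infty$. The paper's remedy is to multiply the source by the indicator $\chi=\mathbf{1}_{[0,3]}$, solve the regularized boundary value problem
\begin{align*}
\widetilde{\mathcal{L}}\delta=-\chi\,\mathcal{L}\widetilde w_0+(\widetilde{\mathcal{L}}-\mathcal{L})\delta
\end{align*}
for $\widehat\delta_0$ (here $\lVert p_2\chi\,\mathcal{L}\widetilde w_0\rVert_\infty$ is finite and can be estimated using $c_P$, $c_Q$ and interval bounds on $p_3\widetilde w_0'$, $p_1\widetilde w_0$ over $[0,3]$), then set $\widehat W_0=c_0(\widetilde w_0+\widehat\delta_0)$. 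Since the discarded source $(1-\chi)\mathcal{L}\widetilde w_0$ vanishes identically on $[0,3]$, $\widehat W_0$ satisfies the homogeneous equation $\mathcal{L}\widehat W_0=0$ there and hence coincides with $W_0$ on $[0,3]$, which is exactly the interval where pointwise bounds are needed; the behavior on $[3,\infty)$ is then obtained purely from the qualitative $q$-ODE argument, not from the contraction. This cutoff-and-match step is the missing idea in your proposal. Separately, you describe the error bound as coming from "a single application" of the inversion estimate, but since $\eta$ (respectively $\widehat\delta_0$) appears on both sides the step is a fixed-point argument, closed because $c_{\mathcal{L}}c_{\widetilde{\mathcal{L}}}<1$; and the normalization $c_0=1/(\widetilde w_0'(0)+\widehat\delta_0'(0))$ must be checked to be finite and positive, which your "harmless rescaling" remark gestures at but does not verify.
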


\begin{proof}
  Let us note that the differential eqation for $W_{0}$ can be written
  in terms of $q$ as
  \begin{align*}
    q''(y)=\frac{1}{2y}(y^2-8)\left(q'(y)+\frac{q(y)}{y}\right)+\frac{4}{y^2}\cos(\widetilde f_0(y))^2 q(y).
  \end{align*}
  From the structure of the above equation one immediately notices that,
  if 
$q(y)<0$ and $q'(y)<0$
for some $y\geq 3$, then
$q''(y)<0$. Consequently, if $q(3)<0$ and $q'(3)<0$, it follows that 
\begin{align}
\label{eq:11}
q(y)<0,\qquad q'(y)<0 \qquad \mbox{ for all }y\geq 3.
\end{align}
The second observation is that if $q(0)>0$, $q(3)<0$ and $q'(y)<0$
for all $y\in[0,3]$ then the function $q$ has exactly one zero in
$[0,3]$.  In combination with \eqref{eq:11} this implies that
$q$ has exactly one zero in $[0,\infty)$ and thus, $W_0$ has exactly one zero in $(0,\infty)$.
 We shall now show that these preconditions do indeed occur
for $q$.

First let us denote $\delta_0:=W_0-\widetilde w_0$, where
$\widetilde w_0$ is the approximation to $w_{0}$ introduced in the
previous section.  Rewriting \eqref{eq:ivp} for $\delta_0$ we
have
\begin{align*}
{\mathcal{L}}\delta_0=-{\mathcal{L}}\widetilde w_0,\qquad \delta_0(0)=0,\qquad \delta_{0}'(0)=1-\widetilde w_{0}'(0)
\end{align*}
or, equivalently,
\begin{align}
\label{eq:17}
\widetilde {\mathcal{L}}\delta_0=-{\mathcal{L}}\widetilde w_0+(\widetilde {\mathcal{L}}-{\mathcal{L}})\delta_0
\end{align}
with the same initial condition.  Note that one could easily normalize
$\widetilde w_{0}'(0)$ to one but this is not necessary for our line of
reasoning.  Eq.~\eqref{eq:17} can be treated with the machinery from the previous
sections.  However, before we proceed we need one more technical modification.

The main difference between \eqref{eq:17} and \eqref{eq:3} is that the
source term $-{\mathcal{L}}\widetilde w_0$ in \eqref{eq:17} is
unbounded and grows exponentially as $y\to\infty$ and we are therefore unable to apply the estimates on
$\widetilde {\mathcal{L}}^{-1}$ from the previous section directly. To fix
this issue we regularize the source term
$-{\mathcal{L}}\widetilde w_0$ outside of $[0,3]$ by
multiplying it with the indicator function
\begin{align*}
\chi(y)=
\begin{cases}
1 & y\in[0,3]\\
0 & y\in (3,\infty).
\end{cases}
\end{align*}
Let us now consider the solution $\widehat\delta_0$ to the ad-hoc
regularized boundary value problem
\begin{equation}
\label{eq:18}
\begin{split}
\widetilde {\mathcal{L}}\delta=-\chi {\mathcal{L}}\widetilde w_0+(\widetilde {\mathcal{L}}-{\mathcal{L}})\delta,\\
\delta(0)=0,\qquad \delta(\infty)=const.
\end{split}
\end{equation}
For the sake of the argument we will assume that the problem
\eqref{eq:18} can actually be solved (we prove this explicitly in the
next paragraph).  With the solution $\widehat \delta_{0}$ at hand, we
define a function
$\widehat W_{0}:=c_0(\widetilde w_{0}+\widehat \delta_{0})$ with
$c_0=1/(\widetilde w_{0}'(0)+\widehat \delta_{0}'(0))$, where we
postpone the verification that $c_0$ is finite for the sake of clarity.  By
construction, $\widehat W_{0}$ solves the initial value problem
\begin{align}
  \label{eq:ivp-hat}
  {\mathcal{L}}\widehat W_{0}=c_0(1-\chi){\mathcal{L}}\widetilde w_{0},\qquad \widehat W_{0}(0)=0,\qquad  \widehat W_{0}'(0)=1.
\end{align}
Observe now that the right hand side of the differential equation in
\eqref{eq:ivp-hat} is identically zero on $[0,3]$.  This means
that $W_{0}$ and $\widehat W_{0}$ coincide on $[0,3]$, i.e.,
$W_{0}=\widehat W_{0}=c_0(\widetilde w_{0}+\widehat \delta_{0})$
on $[0,3]$.  But the expression
$c_0(\widetilde w_{0}+\widehat \delta_{0})$ is actually semi-explicit:
we know $\widetilde w_{0}$ explicitly and we will show that
$\widehat\delta_{0}$ is small, which leads to precise pointwise bounds on
$W_{0}$.

To show that there exists a solution $\widehat \delta_{0}$, we proceed
as in the previous sections: we solve Eq.~\eqref{eq:18} with a fixed point argument applied to the map
\begin{align*}
\mathcal J(\delta):=\widetilde {\mathcal{L}}^{-1}(-\chi {\mathcal{L}}\widetilde w_0+(\widetilde {\mathcal{L}}-{\mathcal{L}})\delta)
\end{align*}
acting on the ball
\begin{align*}
Y=\{\delta\in C^1([0,\infty)):\, \lVert\delta\rVert\le 0.03\}.
\end{align*}
Before we move on, let us take a closer look at the term
$-\chi {\mathcal{L}}\widetilde w_{0}$.  We constructed $\widetilde w_{0}$ so that
$\widetilde {\mathcal{L}}\widetilde w_{0}=0$, thus
\begin{align*}
  -\chi {\mathcal{L}}\widetilde w_{0}=\chi(\widetilde {\mathcal{L}}-{\mathcal{L}})\widetilde w_{0}=\chi (P\widetilde w_{0}'+Q\widetilde w_{0})= (P\chi\widetilde w_{0}'+Q\chi\widetilde w_{0}).
\end{align*}
Using this identity we infer
\begin{align*}
 \lVert p_{2}\chi {\mathcal{L}}\widetilde w_{0}\rVert_{\infty}&=\lVert p_{2}(P\chi\widetilde w_{0}'+Q\chi\widetilde w_{0})\rVert_{\infty}\\
                                                                                &\le \left\lVert P\frac{p_{2}}{p_{3}}\right\rVert_{\infty}\lVert p_{3}\chi\widetilde w_{0}'\rVert_{\infty}+\left\lVert Q\frac{p_{2}}{p_{1}}\right\rVert_{\infty}\lVert p_{1}\chi\widetilde w_{0}\rVert_{\infty}\\
&\le c_{P}\|p_{3}\widetilde w_{0}'\|_{L^\infty(0,3)}                                                                                +c_{Q}\|p_{1}\widetilde w_{0}\|_{L^\infty(0,3)}\\
&= c_{\widetilde {\mathcal{L}}}\left (\tfrac{1}{2}\|p_{3}\widetilde w_{0}'\|_{L^\infty(0,3)} +\tfrac{1}{2}\|p_{1}\widetilde w_{0}\|_{L^\infty(0,3)}\right)\\
                                                                                &\le c_{\widetilde {\mathcal{L}}}c_{\widetilde w_{0}},
\end{align*}
where $c_{\widetilde w_{0}}$ is a bound on the supremum, yet to be
computed, and we made use of
$c_{P}=c_{Q}=c_{\widetilde {\mathcal{L}}}/2$ (cf.~section
\ref{sec-L-L-const}).
From this bound we immediately infer
\begin{align}
\label{eq:19}
\begin{split}
\lVert \mathcal J(\delta)\rVert&\le c_{\mathcal{L}} c_{\widetilde {\mathcal{L}}} (c_{\widetilde w_{0}}+\lVert\delta\rVert),\\
\lVert \mathcal J(\delta)-\mathcal J(\gamma)\rVert &\le c_{\mathcal{L}} c_{\widetilde {\mathcal{L}}} \lVert\delta-\gamma\rVert,
\end{split}
\end{align}
for all $\delta,\gamma\in Y$, where $c_{\mathcal{L}} c_{\widetilde {\mathcal{L}}}=0.0072<1$.  The only new estimate in
\eqref{eq:19} is the one for $c_{\widetilde w_{0}}$, but
it can be readily computed using interval arithmetic.  We have\footnote{
  In the estimate \eqref{eq:supw0} we are actually estimating a
  rational function times an exponential function.  The interval
  arithmetic for rational functions, which we introduced earlier on
  via Definition~\ref{def:ia} and Theorem~\ref{def:ia}, can be easily
  extended to include operations on exponents by defining
  $\exp([a,b]):=[\exp(a),\exp(b)]$ thanks to the monotonicity of
  $\exp$.}
\begin{equation}
  \label{eq:supw0}
  \|p_3\widetilde w_{0}'\|_{L^\infty(0,3)}\le 1.2,\qquad
\|p_1\widetilde w_{0}\|_{L^\infty(0,3)}\le 4,
\end{equation}
so that
\begin{align*}
  c_{\widetilde w_{0}}=\tfrac{1}{2}(1.2+4)=2.6.
\end{align*}
We conclude that the map $\mathcal J$ maps $Y$ into itself,
\begin{align*}
\lVert \mathcal J(\delta)\rVert&\le c_{\mathcal{L}} c_{\widetilde {\mathcal{L}}} (2.6+0.03)\le c_{\mathcal{L}} c_{\widetilde {\mathcal{L}}}\cdot 3=0.0216 < 0.03
\end{align*}
and thus, by the contraction mapping principle, we obtain the existence of $\widehat \delta_0\in Y$
solving \eqref{eq:18}.
The normalization constant
$c_0=1/(\widetilde w_{0}'(0)+\widehat\delta_{0}'(0))$, used in the
definition of $\widehat W_{0}$, can now be easily verified to be
finite and positive because, by explicit computation, $\widetilde w_{0}'(0)>1$ and
$\lvert\widehat\delta_{0}'(0)\lvert \le \lVert\widehat\delta_0\rVert\le 0.03\ll
1$. 

 As already mentioned, we have
$W_0=\widehat W_{0}=c_0(\widetilde w_0+\widehat \delta_0)$ on
$[0,3]$, so the estimate
\begin{align*}
p_1\lvert W_0/c_0-\widetilde w_0\rvert + p_3\lvert W_0'/c_0-\widetilde w_0'\rvert\le \lVert \widehat \delta_{0}\rVert\le  0.03
\end{align*}
holds on $[0,3]$.
In other words,
\begin{align}
\label{eq:20}
\begin{split}
\widetilde w_0-0.03/p_1\leq W_0/c_0 \leq \widetilde w_0+0.03/p_1,\\
\widetilde w_0'-0.03/p_3\leq W_0'/c_0\leq \widetilde w_0'+0.03/p_3
\end{split}
\end{align}
on $[0,3]$.  From \eqref{eq:20} we infer
\begin{align*}
q(3)&=W_0(3)/3 \le c_0(\widetilde w_0(3)+0.03/p_1(3))/3 \leq -0.06 c_0 < 0,\\
q'(3)&=W_0'(3)/3-W_0(3)/3^2 \\
&\le c_0(\widetilde w_0'(3)+0.03/p_3(3))/3-c_0(\widetilde w_0(3)-0.03/p_1(3))/3^2 \leq -0.05 c_0  < 0.
\end{align*}
Consequently, by our earlier result \eqref{eq:11}, $q$ has no zeros in
$[3,\infty)$, nor has $W_0$.  At the same time the interval arithmetic
(with $W_0$ replaced by $\widetilde w_0$ according to the bounds
\eqref{eq:20}) reveals that
\begin{align*}
  q(y)&>0\qquad \text{for } y\in [0,1],\\
  q'(y)&<0\qquad \text{for }y\in[1,3]
\end{align*}
which, in combination with $q(3)<0$, means that $q$ traverses zero
exactly once on $(0,\infty)$, and so does $W_0$.
\end{proof}

\begin{remark}
  Since $\mathcal L$ is an approximation to $\mathcal A_0$, the same type of
  argument can be used to show that Lemma \ref{lem:w0} holds for the
  solution $W_0$ of $\mathcal A_0 W_0=0$, $W_0(0)=0$, $W_0'(0)=1$.
\end{remark}

\subsection{Applying the Sturm oscillation to $\mathcal A_0$}
\label{sec-4-2}

\begin{lemma}
The operator $\mathcal A_0$ has no eigenvalue at $\lambda=0$.
\end{lemma}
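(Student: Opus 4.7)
The plan is to rule out $\lambda=0$ as an eigenvalue by showing that no nontrivial solution of $\mathcal A_{0}w=0$ lies in $\mathcal H=L^{2}_{\rho}(0,\infty)$. Recall from the discussion preceding Proposition~\ref{pr:self-adjoint} that near $y=0$ the two linearly independent behaviors of solutions are $w\sim y$ and $w\sim y^{-2}$, and only the former is square-integrable against $\rho(y)\,dy=y^{2}e^{-y^{2}/4}\,dy$ near $0$. Likewise at infinity the two behaviors are $w\sim 1$ and $w\sim y^{-1}e^{y^{2}/4}$, and only the bounded (recessive) one is $L^{2}_{\rho}$-admissible there, since $\rho$ decays merely like $y^{2}e^{-y^{2}/4}$. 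Hence, any eigenfunction at $\lambda=0$ must, up to a scalar multiple, be the unique solution $W_{0}$ singled out by $W_{0}(0)=0$, $W_{0}'(0)=1$; the problem reduces to checking whether this particular $W_{0}$ happens to be the recessive solution at infinity as well.

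To obstruct this, I would invoke the Remark following Lemma~\ref{lem:w0}, according to which the conclusions of that lemma transfer to the $W_{0}$ solving $\mathcal A_{0}W_{0}=0$. In particular $q(y):=W_{0}(y)/y$ is strictly negative and strictly decreasing on $[3,\infty)$. Monotonicity then yields $q(y)\le q(3)<0$ for every $y\ge 3$, so $q$ is bounded away from $0$ at infinity. On the other hand, if $W_{0}$ were the recessive solution at infinity, we would have $W_{0}(y)\to c$ for some constant $c$, and therefore $q(y)=W_{0}(y)/y\to 0$, contradicting the previous bound. Consequently $W_{0}$ must have a nonzero component along the dominant branch $w\sim y^{-1}e^{y^{2}/4}$, so $|W_{0}(y)|^{2}\rho(y)\sim e^{y^{2}/4}$ as $y\to\infty$; this is not integrable.

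Putting the two steps together, the unique candidate eigenfunction $W_{0}$ fails to belong to $\mathcal H$, so $\mathcal A_{0}$ has no eigenfunction at $\lambda=0$. The main obstacle is the justification of the cited Remark, i.e.\ rerunning the interval-arithmetic argument of Lemma~\ref{lem:w0} with the true potential $V_{0}=-4\sin(f_{0})^{2}/y^{2}$ in place of its approximation $\widetilde V_{0}=-4\sin(\widetilde f_{0})^{2}/y^{2}$. Since Theorem~\ref{th:existence} controls $\|f_{0}-\widetilde f_{0}\|$ by $5\cdot 10^{-4}$ and the quantitative margins produced by the proof of Lemma~\ref{lem:w0} (e.g.\ $q(3)\le -0.06\,c_{0}$ and $q'(3)\le -0.05\,c_{0}$) are far from tight, this perturbation is absorbed without difficulty, and the whole argument goes through.
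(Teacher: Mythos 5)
Your proof is correct and follows essentially the same route as the paper: both reduce to showing that the unique (up to scale) $\mathcal H$-admissible solution $W_0$ of $\mathcal A_0 W_0 = 0$ cannot be recessive at infinity, using the Remark that Lemma~\ref{lem:w0} (in particular $q<0$, $q'<0$ on $[3,\infty)$) carries over to $\mathcal A_0$. The only cosmetic difference is that you obtain the contradiction from $q(y)\le q(3)<0$ versus $q(y)\to 0$, while the paper derives $q'(y)>0$ for large $y$ from $W_0(y)=-1+\mathcal O(y^{-2})$ and contradicts $q'<0$; both are immediate consequences of the same quantitative information.
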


\begin{proof}
  We will show that $W_0\notin\mathcal H$ and thus, $W_0$ is not an
  eigenfunction. The two possible behaviors of $W_0(y)$ when
  $y\to\infty$ are $W_0(y)=-1+\mathcal O(y^{-2})$ or
  $W_0(y)=y^{-1}e^{y^2/4}(1+\mathcal O(y^{-2}))$ (both are up to
  normalization). Only the first, recessive, behavior leads to
  $W_0\in \mathcal H$.  Assume that $W_0(y)=-1+\mathcal O(y^{-2})$.
Then $q(y)=-1/y+\mathcal O(y^{-3})$ and
  $q'(y)=1/y^2+\mathcal O(y^{-4})$, so $q'(y)>0$ for sufficiently
  large $y$.  But this is a contradiction to $q(y)<0$ and $q'(y)<0$
  for $y\ge3$ from Lemma \ref{lem:w0}.  Thus, $W_{0}$ grows
  exponentially and $\lambda=0$ is not an eigenvalue.  \end{proof}

\begin{lemma} \label{lem:gauge} The function
  $W_{\lambda_{0}}(y):=y f_0'(y)$ is an eigenfunction of
  $\mathcal A_0$ to the eigenvalue $\lambda_{0}=-1$. Furthermore, $W_{\lambda_0}>0$ on $(0,\infty)$. \end{lemma}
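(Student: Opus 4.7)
The plan is to establish the eigenvalue identity by a direct computation expressing the time-translation symmetry of the self-similar equation, then to verify $L^{2}_{\rho}$-membership using the decay estimates of Proposition \ref{prop:tec}, and finally to upgrade the nonnegativity of $yf_{0}'$ (which is immediate from monotonicity of $f_{0}$) to strict positivity via a standard uniqueness argument for linear ODEs.

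First I would set $W:=yf_{0}'$ and verify $\mathcal{A}_{0}W=-W$ by straightforward calculus. Computing $W'=f_{0}'+yf_{0}''$ and $W''=2f_{0}''+yf_{0}'''$ and substituting into the formula
\[ \mathcal{A}_{0}W=-W''-\left(\tfrac{2}{y}-\tfrac{y}{2}\right)W'+\tfrac{2\cos(2f_{0})}{y^{2}}W, \]
one obtains an expression involving $f_{0}'$, $f_{0}''$, $f_{0}'''$ and $\sin(2f_{0})$, $\cos(2f_{0})$. Differentiating the self-similar ODE \eqref{eq:ss} once gives a formula for $yf_{0}'''$ which, upon substitution, cancels the $f_{0}'''$ and $\cos(2f_{0})$ contributions. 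A second application of \eqref{eq:ss} itself to replace $f_{0}''$ then eliminates the remaining $\sin(2f_{0})$ terms and leaves precisely $-yf_{0}'$. (Conceptually this is the gauge mode generated by the freedom to shift $T$ in the definition of the self-similar variables, so the computation cannot fail.) This is the main technical step but is entirely mechanical.

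To promote $W$ to a bona fide eigenfunction, I would next check $W\in\mathcal{H}=L^{2}_{\rho}(0,\infty)$. Near the origin $W$ is smooth and vanishes linearly while $\rho(y)=y^{2}e^{-y^{2}/4}$ contributes additional $y^{2}$, giving integrability. Near infinity, Proposition \ref{prop:tec} yields $|f_{0}'(y)|\le C_{1}y^{-3}$ and hence $|W(y)|^{2}\rho(y)\le C y^{-2}e^{-y^{2}/4}$, which is manifestly integrable. Because $\mathcal{A}_{0}$ is in the limit-point case at both endpoints (Proposition \ref{pr:self-adjoint}), any $L^{2}_{\rho}$-solution of $(\mathcal{A}_{0}+1)W=0$ automatically belongs to the domain of the self-adjoint extension, so $W=W_{\lambda_{0}}$ is a genuine eigenfunction to the eigenvalue $-1$.

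Finally, for positivity: Theorem \ref{th:existence} gives $f_{0}$ monotone, hence $f_{0}'\ge 0$ and $W_{\lambda_{0}}\ge 0$ on $(0,\infty)$. Suppose for contradiction $W_{\lambda_{0}}(y_{0})=0$ for some $y_{0}>0$. Since $W_{\lambda_{0}}\in C^{\infty}$ attains a minimum at $y_{0}$, we have $W_{\lambda_{0}}'(y_{0})=0$ as well. But $W_{\lambda_{0}}$ solves the second-order linear ODE $(\mathcal{A}_{0}+1)W=0$, whose coefficients are smooth at $y_{0}>0$; by uniqueness of the initial value problem, $W_{\lambda_{0}}\equiv 0$, contradicting $f_{0}\not\equiv\text{const}$ (which holds because $f_{0}(0)=0$ while, by construction, $f_{0}$ crosses $\pi/2$). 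Thus $W_{\lambda_{0}}>0$ on $(0,\infty)$. The only part requiring real care is the algebraic bookkeeping in the first step; the rest is routine.
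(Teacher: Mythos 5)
Your proof of the eigenvalue identity and of $L^2_\rho$-membership matches the paper's (terse) treatment: the paper simply says ``one can easily check'' and your outline is a correct expansion of that. The interesting divergence is in the positivity argument. The paper does \emph{not} invoke monotonicity plus ODE uniqueness; instead it uses the quantitative bound $f_0'(y)\ge \widetilde f_0'(y)-5\cdot10^{-4}/p_3(y)$ from Theorem~\ref{th:existence}, writes $y\widetilde f_0'(y)-5\cdot10^{-4}y/p_3(y)$ explicitly in Bernstein form (a rational function of $x=y/\sqrt{2+y^2}$ with all coefficients strictly positive), and concludes $W_{\lambda_0}>0$ directly from positivity of the Bernstein coefficients. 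Your route — $f_0$ monotone $\Rightarrow W_{\lambda_0}\ge0$, then strict positivity via uniqueness for the linear second-order ODE at a would-be interior zero — is correct and arguably cleaner.

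One caveat worth flagging: you take the word ``monotone'' in Theorem~\ref{th:existence} as a black box, but the paper's Section~3 proof of that theorem only establishes existence and the norm bound $\lVert f_0-\widetilde f_0\rVert\le5\cdot10^{-4}$; monotonicity is nowhere proved there. The only place in the paper where $f_0'\ge0$ is actually verified is precisely the Bernstein-form computation inside the proof of Lemma~\ref{lem:gauge}. So your ``more elementary'' route does not really bypass the explicit computation — it just moves the dependence on it into the cited theorem. If you want the argument to be self-contained you would still have to run the same positivity check on $\widetilde f_0'(y)-5\cdot10^{-4}/p_3(y)$ that the paper does, at which point the ODE-uniqueness step becomes redundant (the Bernstein form already gives strict positivity). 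What your argument genuinely buys is conceptual transparency: it separates the soft fact ``nonnegativity plus a second-order linear ODE forces strict positivity'' from the hard interval-arithmetic input, which is a worthwhile clarification.
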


\begin{proof}
  One can easily check that if $f_0$ solves \eqref{eq:ss} then
  \begin{align*}
    W_{\lambda_0}(y):=y f_0'(y)
  \end{align*}
  lies in $\mathcal H$ and solves the differential equation
  $\mathcal
  A_{0}W_{\lambda_{0}}=\lambda_{0}W_{\lambda_{0}}$.  Therefore,
  $W_{\lambda_{0}}$ is an eigenfunction of $\mathcal A_0$ to the
  eigenvalue $\lambda_0=-1$.  Moreover, from Theorem \ref{th:existence}
  we know that $f_0$ is close to its approximation $\widetilde f_0$ in
  the sense that
\begin{align*}
\widetilde f_0'(y)-5\cdot 10^{-4}/p_3(y)\le f_0'(y)\le \widetilde f_0'(y)+5\cdot 10^{-4}/p_3(y).
\end{align*}
By applying this bound to the definition of $W_{\lambda_0}$ we get
\begin{align*}
  W_{\lambda_0}(y)\ge y\widetilde f_0'(y)-5\cdot 10^{-4}y/p_3(y).
\end{align*}
The right hand side can be written explicitly in the form
\begin{align*}
y\widetilde f_0'(y)-5\cdot 10^{-4}y/p_3(y)=x(1-x^2)\frac{\sum_{n=0}^N a_n x^{2n}(1-x^2)^{N-n}}{\sum_{k=0}^K b_k x^{2k}(1-x^2)^{K-k}},\qquad x=\frac{y}{\sqrt{2+y^2}}
\end{align*}
with $N,K=59$ and all $a_n,b_k>0$, ultimately yielding
\begin{align*}
W_{\lambda_0}(y)>0,\qquad y\in(0,\infty).
\end{align*}
\end{proof}

At this point, the proof of Theorem \ref{th:stability} reduces to an
application of Sturm comparison and oscillation
theorems. The comparison theorem is as follows.

\begin{lemma}
Let $W_\lambda$ be the unique solution of the initial value problem
\[ \mathcal A_0 W_\lambda=\lambda W_\lambda,\qquad W_\lambda(0)=0,\qquad W_\lambda'(0)=1. \]
If $\lambda\leq -1$ then $W_\lambda$ has no zeros in $(0,\infty)$.
\end{lemma}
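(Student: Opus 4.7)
The plan is a standard Sturm comparison against the gauge solution at $\lambda_0=-1$, which is known to be strictly positive on $(0,\infty)$ by Lemma \ref{lem:gauge}. The case $\lambda=-1$ itself is immediate: by uniqueness of the initial value problem, $W_{-1}$ is a positive scalar multiple of $W_{\lambda_0}(y)=yf_0'(y)$, and the latter has no zeros on $(0,\infty)$.

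For the harder case $\lambda<-1$, I would derive a weighted Wronskian identity. Rewriting the eigenvalue equation in self-adjoint form $-(\rho u')'+\rho\tilde V u=\lambda\rho u$ with $\tilde V(y)=2\cos(2f_0(y))/y^2$, taking the equation for $W_\lambda$ with eigenvalue $\lambda$ and the one for $W_{-1}$ with eigenvalue $-1$, multiplying crosswise by $W_{-1}$ and $W_\lambda$ respectively, and subtracting, yields
\begin{equation*}
\Phi'(y)=-(1+\lambda)\,\rho(y)\,W_{-1}(y)\,W_\lambda(y),\qquad \Phi(y):=\rho(y)\bigl(W_{-1}(y)W_\lambda'(y)-W_\lambda(y)W_{-1}'(y)\bigr).
\end{equation*}

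Suppose for contradiction that $W_\lambda$ has a zero in $(0,\infty)$ and let $y_0>0$ be the smallest one. Since $W_\lambda'(0)=1$, we have $W_\lambda>0$ on $(0,y_0)$ and $W_\lambda'(y_0)\le 0$. Integrating the identity from $0$ to $y_0$, the right-hand side is strictly positive (since $-(1+\lambda)>0$ and the integrand is strictly positive on $(0,y_0)$), while the left-hand side equals $\Phi(y_0)-\Phi(0)=\rho(y_0)W_{-1}(y_0)W_\lambda'(y_0)\le 0$ once $\Phi(0)=0$ is established. This sign contradiction proves the lemma.

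The one technical step I expect to require care is the boundary value $\Phi(0)=0$, since $y=0$ is a regular singular point of the equation. Both $W_\lambda$ and $W_{-1}$ are the recessive solutions with normalization $u(y)=y+O(y^3)$ near $0$; a Taylor expansion then gives $W_{-1}W_\lambda'-W_\lambda W_{-1}'=O(y^3)$, and combined with $\rho(y)=y^2e^{-y^2/4}=O(y^2)$ this delivers $\Phi(y)=O(y^5)\to 0$. Everything else is routine sign bookkeeping once this behavior at the origin is in hand.
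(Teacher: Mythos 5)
Your proof is correct, and it takes a closely related but technically different route from the paper. The paper handles $\lambda<-1$ via the \emph{Picone identity}
\[
\left[\rho\frac{W_\lambda}{W_{\lambda_0}}\bigl(W_\lambda'W_{\lambda_0}-W_\lambda W_{\lambda_0}'\bigr)\right]'
=(\lambda_0-\lambda)\rho W_\lambda^2+\rho\frac{\bigl(W_\lambda' W_{\lambda_0}-W_\lambda W_{\lambda_0}'\bigr)^2}{W_{\lambda_0}^2},
\]
integrates it over $[0,y_0]$ (boundary terms vanish since $\rho(0)=0$ and $W_\lambda(y_0)=0$), and observes that the right-hand side is a sum of two nonnegative integrals whose total is zero, forcing $W_\lambda\equiv 0$ on $[0,y_0]$ and hence a contradiction with nontriviality. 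You instead use the bare weighted Wronskian identity $\Phi'=-(1+\lambda)\rho W_{-1}W_\lambda$, which is the precursor to Picone (the paper's identity is obtained by multiplying $\Phi$ by $W_\lambda/W_{\lambda_0}$ and differentiating). The trade-off: your version needs the extra sign bookkeeping ($W_\lambda>0$ on $(0,y_0)$, $W_\lambda'(y_0)\le 0$) to derive the contradiction from the boundary term at $y_0$, whereas Picone sidesteps any sign-tracking on $W_\lambda$ because the right-hand side is manifestly a sum of squares up to the positive factor $\lambda_0-\lambda$. Your boundary analysis at the regular singular point $y=0$ (both solutions recessive with $u(y)=y+O(y^3)$, so $W_{-1}W_\lambda'-W_\lambda W_{-1}'=O(y^3)$ and $\Phi=O(y^5)$) is correct and is also what underlies, implicitly, the vanishing of the paper's boundary term at $0$. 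Both arguments are standard Sturm-type comparisons and both are sound; the paper's Picone version is marginally cleaner because no separate sign analysis of $W_\lambda$ and $W_\lambda'(y_0)$ is required.
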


\begin{proof}
We follow the proof in \cite{Zettl05}, Theorem 2.6.3.
The case $\lambda=-1$ is handled by Lemma \ref{lem:gauge}, so assume
$\lambda<\lambda_0:=-1$. From Lemma \ref{lem:gauge} we know that $W_{\lambda_0}>0$ on $(0,\infty)$.
Thus, $W_\lambda/W_{\lambda_0}$ is well-defined and a straightforward computation reveals the \emph{Picone identity}
\[ \left [\rho\frac{W_\lambda}{W_{\lambda_0}}(W_\lambda'W_{\lambda_0}-W_\lambda
W_{\lambda_0}')\right ]'=(\lambda_0-\lambda)\rho W_\lambda^2+\rho \frac{(W_\lambda' W_{\lambda_0}-W_\lambda W_{\lambda_0}')^2}{W_{\lambda_0}^2} \]
with $\rho(y)=y^2 e^{-y^2/4}$.
Now assume that the statement is wrong and $y_0>0$ is the first zero of $W_\lambda$ in $(0,\infty)$. 
Integrating the Picone identity from $0$ to $y_0$ yields
\[ 0=(\lambda_0-\lambda)\int_0^{y_0}W_\lambda(y)^2 \rho(y)dy+\int_0^{y_0}
\frac{[W_\lambda'(y) W_{\lambda_0}(y)-W_\lambda(y) W_{\lambda_0}'(y)]^2}{W_{\lambda_0}(y)^2}\rho(y)d y, \]
where we have used $\rho(0)=W_{\lambda}(y_0)=0$.
Consequently, since $\lambda_0-\lambda>0$, we infer $W_\lambda(y)=0$ for all $y\in [0,y_0]$, which implies that $W_\lambda$ is the trivial solution, a contradiction.
\end{proof}

Now the proof of Theorem \ref{th:stability} is readily completed by invoking Theorem 1.2 from \cite{Simon1996}\footnote{Note that there is a small typo in Theorem 1.2 in \cite{Simon1996}: $N(c)$ is supposed to denote the number of zeros of $u_2$ minus the number of zeros of $u_1$, not the other way round.}, which states that in the interval $[\lambda,0)$,
$\lambda\le -1$, there are exactly as many eigenvalues as there are
zeros of $W_0$ minus the zeros of $W_\lambda$, that is, $1-0=1$. Consequently,
there is exactly one eigenvalue in each interval $[\lambda,0)$ for any
$\lambda\le -1$.  This single eigenvalue has to be $\lambda_{0}=-1$
from Lemma~\ref{lem:gauge}. Furthermore, we have already proved that
$W_0$ is not an eigenfunction, so the only eigenvalue in $(-\infty,0]$
is $\lambda_{0}=-1$.

\subsection{Proof of Proposition \ref{prop:tec}}
\label{sec-f0-prop}

  The proof is an extended version of an argument given in
  \cite{Biernat2011}.  We write the equation \eqref{eq:ss} as
  \begin{align}
    \label{eq:f-source}
    \frac{1}{\rho}(\rho f')'=S_1,\qquad S_1(y)=\frac{\sin(2f(y))}{y^2}
  \end{align}
  Differentiating \eqref{eq:f-source} $k-1$ times we get
  \begin{align*}
    \left(\frac{1}{\rho}(\rho f')'\right)^{(k-1)}=S_{1}^{(k-1)}.
  \end{align*}
  We then use the identity
  \begin{align*}
    \left(\frac{1}{\rho}(\rho f')'\right)'(y)-\frac{1}{\rho}(\rho f'')'(y)=-\left(\frac{2}{y^2}+\frac{1}{2}\right) f'(y)
  \end{align*}
  to obtain a differential equation for $f''$, which reads
  \[ \frac{1}{\rho}(\rho f'')'(y)=S_1'(y)+\left(\frac{2}{y^2}+\frac{1}{2}\right) f'(y)=:S_2(y). \]
  By repeating this procedure, we find a differential equation for the
  $k$-th derivative of $f$,
  \begin{align}
    \label{eq:f-source-k}
    \frac{1}{\rho}(\rho f^{(k)})'=S_{k}, \qquad S_{k}(y)=S_{k-1}'(y)+\left(\frac{2}{y^{2}}+\frac{1}{2}\right)f^{(k-1)}(y).
  \end{align}
  Multiplying
  \eqref{eq:f-source-k} by $\rho$ and integrating on $[y,\infty)$
  gives us
  \begin{align*}
    \lim_{x\to\infty}\rho(x) f^{(k)}(x)-\rho(y) f^{(k)}(y)=\int_{y}^{\infty}\rho(x)S_{k}(x)\,dx
  \end{align*}
  It is easy to see that any derivative of $f$ will grow at most
  algebraically at infinity (this follows directly from
  \eqref{eq:f-source} and its derivatives); at the same time
  $\rho(y)=y^{2}e^{-\frac{y^{2}}{4}}$ decays exponentially at infinity
  so the limit on the left hand side is zero and the integral on the
  right hand side of \eqref{eq:f-source-k} converges.  We thus end up
  with
  \begin{align*}
    f^{(k)}(y)=\frac{-\int_{y}^{\infty}\rho(x)S_{k}(x)\,dx}{\rho(y)}.
  \end{align*}

  Assume now that $\lvert f^{(n)}(y)\rvert\lesssim y^{-2-n}$ for any
  $n=1,\dots k-1$. Then it is easy to see that
  $\lvert S_{k}(y)\rvert\leq C_{k-1} y^{-1-k}$ for some constant $C_{k-1}>0$ because, in the leading
  order, $S_{k}$ consists of terms of the form $f^{(k-1)}(y)$ and
  $y^{-1-k}\sin(2f(y))$. In turn, this implies
  \begin{align*}
    \lvert f^{(k)}(y)\rvert\leq C_{k-1}\frac{\int_{y}^{\infty}\rho(x)x^{-1-k}\,dx}{\rho(y)}\le  C_{k-1} \frac{y^{-1-k}\int_{y}^{\infty}\rho(x)\,dx}{\rho(y)} \leq C_k y^{-2-k}.
  \end{align*}
  Consequently, the claim
  $|f^{(k)}(y)|\leq C_k y^{-2-k}$ for all $k\in \mathbb N$ follows inductively.

The statement $f^{(2k)}(0)=0$ for all $k\in \mathbb N_0$ follows essentially from the reflection symmetry of Eq.~\eqref{eq:ss} under $y\mapsto -y$ and is easily proved inductively; we omit the details.

\begin{appendix}
\section{Estimates on the ratios of fundamental solutions}
\label{sec-5}


\noindent The proof that the ratios $\lVert\widetilde w_1/v_1\rVert\le 1.01$
and $\lVert\widetilde w_1'/v_1'\rVert\le 1.01$ is a straightforward
application of interval arithmetic as both $w_1$ and $v_1$ are
rational functions.  Things get more complicated when we try to
compare $\widetilde w_0$ and $v_0$ or $\widetilde w_0'$ and $v_0'$ because, by
definition,
\begin{align*}
v_0(y)=\frac{3}{y^2}e^{y^2/4}(-y+(2+y^2)D_+(y/2))
\end{align*}
contains Dawson's integral.  As far as we know there
is no definition of interval arithmetic for Dawson's integral, so we
will have to resort to a more analytical approach.  The goal now is to
find an approximation $\widetilde v_0$ to $v_0$ such that the ratio
$\widetilde w_0/\widetilde v_0$ is rational, so that we can apply our
interval arithmetic algorithm.

Let us remind that the Wrońskian of $v_0$ and $v_1$ is
\begin{align}
\label{eq:8}
v_0v_1'-v_0'v_1=W(v_0,v_1)=-6y^{-2}e^{y^2/4},
\end{align}
or equivalently
\begin{align}
\label{eq:14}
\left(\frac{v_0}{v_1}\right)'-\frac{6e^{y^2/4}}{v_1(y)^2y^{2}}=0,
\end{align}
so $v_0$ can be written in the form of the following integral
\begin{align*}
v_0(y)=6v_1(y)\int_0^y\frac{e^{x^2/4}}{v_1(x)^2x^2}\,dx.
\end{align*}
Let us assume that we found a suitable approximation $\widetilde
   v_0$ such that
\begin{align}
\label{eq:15}
\left(\frac{\widetilde v_0}{v_1}\right)'-\frac{6e^{y^2/4}}{v_1(y)^2y^{2}}=-\frac{6e^{y^2/4}\varepsilon(y)}{v_1(y)^2y^{2}}
\end{align}
with
\begin{align}
\label{eq:7}
0\le \varepsilon(y)\le c_\varepsilon\ll1.
\end{align}
The integral representation of $\widetilde v_0$ is simply
\begin{align*}
\widetilde v_0(y)&=6v_1(y)\int_0^y\frac{e^{x^2/4}}{v_1(x)^2x^2}(1-\varepsilon(x))\,dx\\
&=v_0(y)-6v_1(y)\int_0^y\frac{e^{x^2/4}\varepsilon(x)}{v_1(x)^2x^2}\,dx.
\end{align*}
which, after taking into account \eqref{eq:7}, leads to
\begin{align}
\label{eq:10}
(1-c_\varepsilon)v_0(y)\le \widetilde v_0(y)\le v_0(y).
\end{align}
This means that finding $\widetilde v_0$ such that $\varepsilon$ is
small in the sense of \eqref{eq:7} can be directly translated to a
pointwise bound on $v_0$.  As for the estimate on the derivative
$v_0'$ we subtract \eqref{eq:14} from \eqref{eq:15} and get
\begin{align*}
\widetilde v_0'-v_0'=-\frac{6e^{y^2/4}}{v_1(y)y^{2}}\varepsilon(y)+\frac{v_1'(y)}{v_1(y)}(\widetilde v_0-v_0)
\end{align*}
The first term on the right hand side is negative and applying
\eqref{eq:10} to the second term (and remembering that
$v_1'(y)/v_1(y)=-4/(2y+y^3)<0$) we get
\begin{align*}
\widetilde v_0'-v_0'\le \frac{-c_\varepsilon}{1-c_\varepsilon}\frac{v_1'}{v_1}\widetilde v_0
\end{align*}
which implies
\begin{align}
\label{eq:16}
\frac{\widetilde v_0'(y)}{v_0'(y)}\le \frac{1}{1+\frac{c_\varepsilon}{1-c_\varepsilon}\frac{v_1'(y)}{v_1(y)}\frac{\widetilde v_0(y)}{\widetilde v_0'(y)}}.
\end{align}
Note that $v_0', \tilde v_0'\geq 0$.

To find the right approximation $\widetilde v_0$, we simply use a well
established expression of $D_+$ as a continued fraction
(\cite{Mccabe1974}, formula (2.7)),
\begin{align}
\label{eq:9}
D_+(z)=\frac{z}{1+2z^2-}\,\frac{4z^2}{3+2z^2-}\,\frac{8z^2}{5+2z^2-}\,\frac{12z^2}{7+2z^2-}\,\dots,
\end{align}
and plug it directly into the definition of $v_0$.  Truncating
\eqref{eq:9} at the twelfth term we find, via interval arithmetic,
that the associated $\widetilde v_0$ has
\begin{align*}
0\le \varepsilon(y)\le c_\varepsilon=\frac{1}{500}
\end{align*}
so from \eqref{eq:10} we have
\begin{align}
\label{eq:13}
\frac{\widetilde w_0(y)}{v_0(y)}\le\frac{\widetilde w_0(y)}{\widetilde v_0(y)}.
\end{align}
As for the ratio of derivatives we use \eqref{eq:16} to get
\begin{align}
\label{eq:12}
\frac{\widetilde w_0'(y)}{v_0'(y)}\le \frac{\widetilde w_0'(y)}{\widetilde v_0'(y)}\cdot\frac{1}{1+\frac{c_\varepsilon}{1-c_\varepsilon}\frac{v_1'(y)}{v_1(y)}\frac{\widetilde v_0(y)}{\widetilde v_0'(y)}}.
\end{align}
The functions on the right hand sides of \eqref{eq:13} and
\eqref{eq:12} are now explicit rational functions and they can be
easily estimated by interval arithmetic which yields
\begin{align*}
\frac{\widetilde w_0(y)}{v_0(y)}\le 1.01,\qquad \frac{\widetilde w_0'(y)}{v_0'(y)}\le 1.01.
\end{align*}
Since all functions with subscript $0$ are nonnegative, the claimed bounds follow.

\newpage
\section{Tables of coefficients for approximate solutions}

\begin{table}[htb]
\caption{\label{tab:f0}The coefficients of the approximate solution $\widetilde f_0$}
\centering
\begin{tabular}{r|cr|c}
$n$ & $(f_0)_n$ & $n$ & $(f_0)_n$\\
\hline
  0 & $\frac{268245}{72878}$ &  8 & $\frac{1}{204079}$\\
  1 & $\frac{-3174}{105551}$ &  9 & $\frac{1}{675805}$\\
  2 & $\frac{1897}{97022}$   & 10 & $\frac{1}{1400761}$\\
  3 & $\frac{14}{72731}$     & 11 & $\frac{1}{3586839}$\\
  4 & $\frac{79}{119383}$    & 12 & $\frac{1}{7289041}$\\
  5 & $\frac{4}{66337}$      & 13 & $\frac{1}{16940631}$\\
  6 & $\frac{5}{109368}$     & 14 & $\frac{1}{59286294}$\\
  7 & $\frac{1}{109045}$     &    &
\end{tabular}
\end{table}

\begin{table}[htb]
\caption{\label{tab:w0}The coefficients of $\widetilde w_0$}
\centering
\begin{tabular}{r|cr|cr|cr|c}
$n$ & $(w_0)_n$ & $n$ & $(w_0)_n$ & $n$ & $(w_0)_n$ & $n$ & $(w_0)_n$\\
\hline
0 & $\frac{18741}{112373}$ & 1 & $\frac{-61989}{170650}$ & 2 & $\frac{2353}{10197}$ & 3 & $\frac{-9791}{92415}$\\
4 & $\frac{2796}{44407}$ & 5 & $\frac{-1970}{54159}$ & 6 & $\frac{723}{52459}$ & 7 & $\frac{-1268}{126121}$\\
8 & $\frac{1160}{216371}$ & 9 & $\frac{-52}{33393}$ & 10 & $\frac{113}{60053}$ & 11 & $\frac{-66}{114007}$\\
12 & $\frac{21}{98015}$ & 13 & $\frac{-19}{49629}$ & 14 & $\frac{-2}{77919}$ & 15 & $\frac{-4}{43893}$\\
16 & $\frac{2}{43899}$ & 17 & $\frac{1}{42980}$ & 18 & $\frac{4}{101127}$ & 19 & $\frac{1}{77370}$\\
20 & $\frac{1}{192093}$ & 21 & $\frac{-1}{169116}$ & 22 & $\frac{-1}{153707}$ & 23 & $\frac{-1}{165215}$\\
24 & $\frac{-1}{336932}$ & 25 & $\frac{-1}{1215317}$ & 26 & $\frac{1}{1285791}$ & 27 & $\frac{1}{802012}$\\
28 & $\frac{1}{855284}$ & 29 & $\frac{1}{1353569}$ & 30 & $\frac{1}{3240550}$ & 31 & $\frac{-1}{47007496}$\\
32 & $\frac{-1}{5407794}$ & 33 & $\frac{-1}{4477156}$ & 34 & $\frac{-1}{5647023}$ & 35 & $\frac{-1}{9667418}$\\
36 & $\frac{-1}{32837828}$ & 37 & $\frac{1}{54537641}$ & 38 & $\frac{1}{22028453}$ & 39 & $\frac{1}{20327064}$\\
40 & $\frac{1}{22814866}$ & 41 & $\frac{1}{33327962}$ & 42 & $\frac{1}{50263835}$ & 43 & $\frac{1}{112134837}$\\
44 & $\frac{1}{190131191}$ &  &  &  &  &  & \\
\end{tabular}
\end{table}

\begin{table}[htb]
\caption{\label{tab:w1}The coefficients of $\widetilde w_1$}
\centering
\begin{tabular}{r|cr|cr|cr|c}
$n$ & $(w_1)_n$ & $n$ & $(w_1)_n$ & $n$ & $(w_1)_n$ & $n$ & $(w_1)_n$\\
\hline
0 & $\frac{4607589}{9727120}$ & 1 & $\frac{1737631}{2734940}$ & 2 & $\frac{-3983}{1039272}$ & 3 & $\frac{-256739}{1728298}$\\
4 & $\frac{18231}{688103}$ & 5 & $\frac{212793}{6236542}$ & 6 & $\frac{-66549}{3547063}$ & 7 & $\frac{-1981}{1692925}$\\
8 & $\frac{13293}{2855801}$ & 9 & $\frac{-10983}{6516796}$ & 10 & $\frac{-28}{197979}$ & 11 & $\frac{1701}{5019274}$\\
12 & $\frac{-525}{3869749}$ & 13 & $\frac{147}{5955391}$ & 14 & $\frac{3}{664118}$ & 15 & $\frac{-63}{6270472}$\\
16 & $\frac{21}{2638831}$ & 17 & $\frac{-7}{2304718}$ & 18 & $\frac{-7}{18723692}$ & 19 & $\frac{7}{6169514}$\\
20 & $\frac{-21}{34570120}$ & 21 & $\frac{7}{84719934}$ & 22 & $\frac{3}{31279664}$ & 23 & $\frac{-21}{272250212}$\\
24 & $\frac{3}{117220480}$ & 25 & $\frac{21}{13759709834}$ & 26 & $\frac{-21}{2997459842}$ & 27 & $\frac{7}{1630774626}$\\
28 & $\frac{-7}{6156689032}$ & 29 & $\frac{-21}{64552514338}$ & 30 & $\frac{21}{41777614736}$ & 31 & $\frac{-21}{84914413922}$\\
32 & $\frac{21}{547142712584}$ & 33 & $\frac{3}{84895626842}$ & 34 & $\frac{-3}{87634052792}$ & 35 & $\frac{1}{80692012804}$\\
\end{tabular}
\end{table}
\end{appendix}
\clearpage

\bibliography{harmspec}

\begin{thebibliography}{10}

\bibitem{Angenent2009}
Sigurd~B. Angenent, Josephus Hulshof, and Hiroshi Matano.
\newblock The radius of vanishing bubbles in equivariant harmonic map flow from
  {$D^2$} to {$S^2$}.
\newblock {\em SIAM Journal on Mathematical Analysis}, 41(3):1121--1137, 2009.

\bibitem{Biernat2014}
Pawe{\l} Biernat.
\newblock {Non-self-similar blow-up in the heat flow for harmonic maps in
  higher dimensions}.
\newblock {\em Nonlinearity}, 28(1):1--26, 2015.

\bibitem{Biernat2011}
Pawe{\l} Biernat and Piotr Bizo{\'{n}}.
\newblock {Shrinkers, expanders, and the unique continuation beyond generic
  blowup in the heat flow for harmonic maps between spheres}.
\newblock {\em Nonlinearity}, 24(8):2211--2228, 2011.

\bibitem{Biernat-Donninger-Schorkhuber}
Pawe{\l} Biernat, Roland Donninger, and Birgit Sch\"orkhuber.
\newblock Stable self-similar blowup in the supercritical heat flow of harmonic
  maps.
\newblock {\em Calc. Var. Partial Differential Equations}, 56(6):Art. 171, 31,
  2017.

\bibitem{biernat-seki}
Pawe{\l} Biernat and Yukihiro Seki.
\newblock {Type II blow-up mechanism for supercritical harmonic map heat flow}.
\newblock {\em Preprint arXiv:1601.01831}, 2016.

\bibitem{Bizon2014}
Piotr Bizo{\'{n}} and Arthur Wasserman.
\newblock {Nonexistence of Shrinkers for the Harmonic Map Flow in Higher
  Dimensions}.
\newblock {\em International Mathematics Research Notices},
  2015(17):7757--7762, 2015.

\bibitem{Chen-Struwe-regularity}
Yunmei Chen and Michael Struwe.
\newblock {Existence and partial regularity results for the heat flow for
  harmonic maps}.
\newblock {\em Mathematische Zeitschrift}, 201(1):83--103, 1989.

\bibitem{CosHuaSch12}
Ovidiu Costin, Min Huang, and Wilhelm Schlag.
\newblock On the spectral properties of {$L_\pm$} in three dimensions.
\newblock {\em Nonlinearity}, 25(1):125--164, 2012.

\bibitem{Creek}
Matthew Creek, Roland Donninger, Wilhelm Schlag, and Stanley Snelson.
\newblock Linear stability of the skyrmion.
\newblock {\em Int. Math. Res. Not. IMRN}, (8):2497--2537, 2017.

\bibitem{Donninger2016}
Roland Donninger and Birgit Sch{\"{o}}rkhuber.
\newblock {Stable blowup for the supercritical Yang-Mills heat flow}.
\newblock {\em Preprint arXiv:1604.07737}, 2016.

\bibitem{Eells1964}
James Eells and J.~H. Sampson.
\newblock {Harmonic Mappings of Riemannian Manifolds}.
\newblock {\em American Journal of Mathematics}, 86(1):109, 1964.

\bibitem{Fan1999}
Huijun Fan.
\newblock {Existence of the self-similar solutions in the heat flow of harmonic
  maps}.
\newblock {\em Science in China Series A: Mathematics}, 42(2):113--132, 1999.

\bibitem{Gastel2002a}
Andreas Gastel.
\newblock {Singularities of first kind in the harmonic map and Yang-Mills heat
  flows}.
\newblock {\em Mathematische Zeitschrift}, 242(1):47--62, 2002.

\bibitem{Germain2016}
Pierre Germain, Tej-Eddine Ghoul, and Hideyuki Miura.
\newblock On uniqueness for the harmonic map heat flow in supercritical
  dimensions.
\newblock {\em Preprint arXiv:1601.06601}, 2016.

\bibitem{Germain2010}
Pierre Germain and Melanie Rupflin.
\newblock {Selfsimilar expanders of the harmonic map flow}.
\newblock {\em Annales de l'Institut Henri Poincare (C) Non Linear Analysis},
  28(5):743--773, 2011.

\bibitem{Mccabe1974}
J.~H. McCabe.
\newblock {A continued fraction expansion, with a truncation error estimate,
  for Dawson's integral}.
\newblock {\em Mathematics of Computation}, 28(127):811--816, 1974.

\bibitem{Quittner2016}
Pavol Quittner.
\newblock Uniqueness of singular self-similar solutions of a semilinear
  parabolic equation.
\newblock {\em Preprint arXiv:1605.07394}, 2016.

\bibitem{Raphael2011}
Pierre Rapha{\"{e}}l and Remi Schweyer.
\newblock Stable blowup dynamics for the 1-corotational energy critical
  harmonic heat flow.
\newblock {\em Communications on Pure and Applied Mathematics}, 66(3):414--480,
  2013.

\bibitem{RapSch14}
Pierre Rapha{\"e}l and Remi Schweyer.
\newblock Quantized slow blow-up dynamics for the corotational energy-critical
  harmonic heat flow.
\newblock {\em Analysis \& PDE}, 7(8):1713--1805, 2014.

\bibitem{Reed2}
Michael Reed and Barry Simon.
\newblock {\em {Methods of modern mathematical physics II}}.
\newblock Academic Press, 1975.

\bibitem{Simon1996}
Barry Simon, Gerald Teschl, and Fritz Gesztesy.
\newblock Zeros of the {Wronskian} and renormalized oscillation theory.
\newblock {\em American Journal of Mathematics}, 118(3):571--594, 1996.

\bibitem{Struwe1989}
Michael Struwe.
\newblock The evolution of harmonic maps: Existence, partial regularity, and
  singularities.
\newblock In {\em Nonlinear Diffusion Equations and Their Equilibrium States,
  3}, pages 485--491. 1989.

\bibitem{Tucker}
Warwick Tucker.
\newblock {Validated Numerics for Pedestrians}.
\newblock In {\em European Congress of Mathematics Stockholm, June 27 – July
  2, 2004}, pages 851--860. European Mathematical Society Publishing House,
  Zuerich, Switzerland.

\bibitem{Tucker2011}
Warwick Tucker.
\newblock {\em {Validated Numerics: A Short Introduction to Rigorous
  Computations.}}
\newblock Princeton University Press, 2011.

\bibitem{Zettl05}
Anton Zettl.
\newblock {\em Sturm-{L}iouville theory}, volume 121 of {\em Mathematical
  Surveys and Monographs}.
\newblock American Mathematical Society, Providence, RI, 2005.

\end{thebibliography}
\bibliographystyle{plain}

\end{document}